\documentclass[12pt,twoside]{article}
\usepackage{latexsym, amssymb,amsthm,amsmath,xypic,lscape,epsfig,setspace,tikz,slashed,color}
\usepackage[retainorgcmds]{IEEEtrantools}
\usetikzlibrary{decorations.markings}
 \usetikzlibrary{arrows}
\usepackage[pdftex]{hyperref}
\usepackage[margin=10pt,font=small,labelfont=bf]{caption}

\theoremstyle{plain}
\newtheorem{theorem}{Theorem}[section]
\newtheorem{proposition}[theorem]{Proposition}
\newtheorem{lemma}[theorem]{Lemma}
\newtheorem{question}[theorem]{Question}
\newtheorem{corollary}[theorem]{Corollary}

\theoremstyle{definition}
\newtheorem{definition}[theorem]{Definition}
\AtBeginEnvironment{definition}{%
  \pushQED{\qed}%
}
\AtEndEnvironment{definition}{\popQED\endexample}

\newtheorem{remark}[theorem]{Remark}
\AtBeginEnvironment{remark}{%
  \pushQED{\qed}%
}
\AtEndEnvironment{remark}{\popQED\endexample}

\newtheorem{example}[theorem]{Example}
\AtBeginEnvironment{example}{%
  \pushQED{\qed}%
}
\AtEndEnvironment{example}{\popQED\endexample}

\newenvironment{renumerate}%
{%
\begin{enumerate}}%
{\end{enumerate}%
}%

{\vskip6pt%
\noindent%
{\it Remarks}. %
\begin{renumerate}}%
{\end{renumerate}\vskip6pt}

\makeatletter
\def\Ddots{\mathinner{\mkern1mu\raise\p@
\vbox{\kern7\p@\hbox{.}}\mkern2mu
\raise4\p@\hbox{.}\mkern2mu\raise7\p@\hbox{.}\mkern1mu}}
\makeatother

\newcommand{\R}{\text{${\mathbb R}$}}
\newcommand{\C}{\text{$\mathbb C$}}

\newcommand{\Z}{\text{$\mathbb Z$}}

\newcommand{\Q}{\text{$\mathbb Q$}}

\renewcommand{\frak}[1]{\text{$\mathfrak{#1}$}}
\renewcommand{\tilde}{\widetilde}

\newcommand{\M}{\text{$\mathcal{M}$}}

\renewcommand{\bar}{\overline}

\newcommand{\del}{\text{$\partial$}}

\newcommand{\tensor}{\otimes}

\newcommand{\mc}[1]{\text{$\mathcal{#1}$}}

\newcommand{\noqed}{\let\qed\relax}

\newcommand{\IP}[1]{\langle #1 \rangle}

\definecolor{darkred}{rgb}{0.7,0,0}
\definecolor{darkgreen}{rgb}{0,0.5,0}
\definecolor{darkblue}{rgb}{0,0,0.5}
\definecolor{darkorange}{rgb}{0.3,0.6,0.2}
\definecolor{darkyellow}{rgb}{0.75,0.75,0.2}

\date{} \usepackage{color} \definecolor{tocolor}{rgb}{.1,.1,.5}
\definecolor{urlcolor}{rgb}{.2,.2,.6}
\definecolor{linkcolor}{rgb}{.1,.1,.6}
\definecolor{citecolor}{rgb}{.6,.2,.1}
\hypersetup{backref=true, colorlinks=true, urlcolor=urlcolor,
  linkcolor=linkcolor, citecolor=citecolor}

\begingroup
\hypersetup{linkcolor=blue}

\endgroup

\numberwithin{equation}{section}

\begin{document}

\title{Massey products, sphere bundles and T-duality}
\author{Gil R. Cavalcanti\thanks{{\tt gil.cavalcanti@gmail.com}, Department of Mathematics,
Utrecht University, The Netherlands}}
\maketitle

\abstract{We study spherical T-duality for iterated sphere bundles. We show that for a class of iterated sphere bundles the cohomological data contained in its Gysin sequences can be repackaged into data for a vanishing Massey product. We further show that if these bundles are endowed with an integral cohomology class of transgressive degree one, then they have a T-dual iterated sphere bundle, namely, the one  associated to the same Massey product read backwards.}
\vskip12pt
\noindent
MSC classification 2020: 	55R25, 55N45, 55R20, 57R20, 57R19, 57R22, 57N65, 81T30.\\
Subject classification: Differential Topology.\\
Keywords: Massey products, sphere bundles, T-duality.\\

\tableofcontents

\section{Introduction}

We uncover a surprising  relation between three  concepts that a first sight bear no connection: Massey products, iterated sphere bundles and T-duality. Namely we show that:
\begin{enumerate}
\item  iterated sphere bundles are intimately related to the data needed to compute Massey products at the cochain level  and
\item iterated sphere bundles have T-duals which are described by reading the associated Massey product backwards.
\end{enumerate}
Let us say a few words about each of these concepts separately.

Massey products \cite{Mas58} belong to the realm of Algebraic Topology. They are defined for differential graded algebras (DGAs) and their cohomology. The binary Massey product is just the induced product in cohomology which is a first order compatibility condition between the algebra product and the differential. Higher Massey products are obtained from higher operations on cochains which are built on the fact that the zero  cohomology class can be represented by nonzero cochains. As such, higher Massey products are  a clear obstruction to the existence of a quasi-isomorphism between a DGA and its cohomology and  appear, for example, in questions of formality of manifolds \cite{Sul77}.

Iterated sphere bundles belong to the realm of differential geometry. They consist of  a finite collection of  bundles over bundles  over a fixed manifold 
\[E_N \to  E_{N-1} \to \dots \to E_1 \to  E_0 =M,\]
such that for each $i$, $E_i \to E_{i-1}$ is an oriented sphere bundle with odd dimensional fibers. Cohomologically, one can describe $E_N$ via a sequence of Gysin sequences. We will focus on ``level-one'' iterated sphere bundles (see Definition \ref{def:level i}). One consequence of this condition is that the fibers of $E_N \to E_0$ are just the product of the spheres used in the iteration process.  Notice however that this process allows for (higher) monodromy. For example,  level-one iterated circle bundles are more general than principal torus bundles and are particular cases of  principal affine torus bundles as studied in \cite{Ba15}.

Possibly the least familiar mathematical concept we use  is spherical T-duality \cite{BEM15,LSW16,CHU24}. T-duality, or target space duality, is a notion that comes from string theory where it  yields an equivalence between theories compactified on possibly distinct  target spaces \cite{Bus87,RV92}.  The original physical notion of  T-duality for circle bundles found a solid mathematical foundation in \cite{BEM04,CG11}  and has been extended in many directions  including  principal and  affine torus bundles \cite{BHM04,Ba15} (which allow for monodromy at the price of introducing local systems) and to sphere bundles \cite{BEM15,LSW16,CHU24}.  T-duality for principal torus bundles and spherical T-duality  are  often described in a colloquial way as  ``an exchange of topological information from a background cohomology class with topological information present in the characteristic classes of the sphere bundle''.  Mathematically, T-dual spaces are related by three isomorphisms:  one in twisted cohomology, one in twisted K-theory \cite{BEM04,BEM15} and one of Courant algebroids \cite{CG11,CHU24}. Therefore, regardless of the physical applications, finding  T-dual spaces yields precise  mathematical relations between T-dual spaces.

In this paper we will show that there is a near equivalence between Massey product data (for vanishing Massey products) and  level-one iterated sphere bundles with a background closed form. We will also show that given an iterated sphere bundle $(E_i\to E_{i-1})_{i=0,\dots N}$ if we read its associated  Massey product ``backwards'' we can construct a second (possibly topologically distinct) iterated sphere bundle and that this sphere bundle is T-dual to the original one. The original  description of T-duality as an exchange of topological information from a background cohomology class with topological information from characteristic classes of the sphere bundle obtains a clear re-interepretation in this context. The new set-up has many more moving parts and is not as immediately stated, but, once the relation is seen, it cannot be unseen. This interpretation had not appeared in the literature yet, even in the context of affine torus bundles, which are known to be relevant to string theory.

This paper is organised as follows. In Section  \ref{sec:Massey}, we introduce Massey produtcs and Massey product data. We finish that section explaining how to repack Massey product data in  matrix notation, as  done in \cite{May69}. In Section \ref{sec:interated bundles}, we introduce iterated sphere bundles, and the tools to study their cohomology. We then  state and prove  the precise relation between transgressive level-one iterated sphere bundles and vanishing Massey product data. In Section \ref{sec:order reversal} we explain that reading the Massey product data backwards also yields iterated sphere bundles. This is in itself not automatic because we work over integral cochains which do not form  a graded commutative algebra, so flipping the order of the factors in a Massey product is not an allowed operation. In Section \ref{sec:T-duality} we introduce the  mathematical definition of spherical T-duality and show that the order reversal duality of Massey products is T-duality. We give a few concrete examples in Section \ref{sec:examples}  including a differential model the complement of a nodal fiber in an elliptic fibration in four dimensions. In Section \ref{sec:beyond}, we indicate a way to extend the setup beyond interated sphere bundles.

The author thanks Marius Crainic for useful conversations and David Baraglia for valuable feedback on an early version of the manuscript.

\section{Massey products}\label{sec:Massey}

Massey products are standard algebraic operations introduced by Massey back in the 50's \cite{Mas58}. We introduce them here for completeness and to fix the notation we will use in the rest of the paper. We start with the usual description of Massey products and finish with the more compact matrix notation for  the concept \cite{May69}.  The home to Massey products are differential graded algebras, which we introduce next.

\begin{definition}
A \emph{differential graded algebra} (DGA) over a ring $R$ is a graded algebra  $\mc{M}^\bullet$ endowed with an $R$-linear degree 1 map, $d \colon \mc{M}^k\to \mc{M}^{k+1}$ (for all $k$), such that
\begin{itemize}
\item ($d$ is a differential) $d^2 =0$ ,
\item ($d$ is a derivation) $d(a b) = (d a) b+ (-1)^{|a|}a (d b)$.
\end{itemize}
\end{definition}

Since $d^2=0$ we can define the cohomology of $(\mc{M}^\bullet,d)$. Because $d$ is $R$-linear and a derivation, $H^\bullet(\mc{M},d)$ is in itself an $R$-algebra.

Massey products are $n$-ary operations defined inductively in the cohomology algebra of a DGA. A convenient way to keep track of the many signs that appear in the definition is by introducing the sign change operator
\[\bar{a}: = (-1)^{|a|+1}a.\] 
It is good to spell out how this operator behaves when  composed with other common operators:
\begin{equation}\label{eq:properties}
\overline{ab} = - \bar{a}\bar{b},\quad d\bar{a} = - \overline{da},\quad d(ab) = (da)b - \bar{a} db.
\end{equation}
With the sign change operator at hand we are ready to define Massey products.
\begin{definition}
The \emph{binary Massey product} is the degree-zero operation
\[\IP{\cdot,\cdot}\colon H^\bullet(\mc{M})\times H^\bullet(\mc{M}) \to H^\bullet(\mc{M}), \quad \IP{[a_1^0],[a_2^1]} := [\bar{a_1^0} a_2^1].\]
\end{definition}
\begin{definition}
The \emph{triple Massey product} is a partially defined degree $-1$ operation,
\[\IP{\cdot,\cdot,\cdot}\colon H^\bullet(\mc{M})\times H^\bullet(\mc{M}) \times H^\bullet(\mc{M}) \to H^\bullet(\mc{M}).\]
The triple product  $ \IP{[a_1^0],[a_2^1],[a_3^2]}$ is only defined if the binary products $ \IP{[a_1^0],[a_2^1]}$  and $ \IP{[a_2^1],[a_3^2]}$ vanish, in which case there are cochains $a_2^0$ and $a_3^1$ such that
\begin{equation}\tag{*}
da_2^0 = \bar{a_1^0} a_2^1,\quad  da^1_3 = \bar{a_2^1}a_3^2.
\end{equation}
With those at hand, the  \emph{triple  Massey product}  is
\[ \IP{[a_1^0],[a_2^1],[a_3^2]} := [\bar{a_1^0} a_3^1 + \bar{a_2^0}a_3^2] + [\bar{a_1^0}] H^\bullet(\M) + H^\bullet(\M)[a_3^2].\]
\end{definition}

Notice that
\begin{itemize}
\item  by the convenient choice of signs, the cochain $\bar{a_1^0} a_3^1 + \bar{a_2^0}a_3^2$ is a cocycle for combinatorial reasons,
\item  changing $a_2^0$  and $a_3^1$ by cocycles  preserves (*), so the choices  of $a_2^0$ and $a_3^1$ are by no means unique and that is why the triple product is not a cohomology class, but a coset.
\end{itemize}
Said another way, the triple product is the set of all cohomology classes of the form  $[\bar{a_1^0} a_3^1 + \bar{a_2^0}a_3^2] $  obtained by making different choices of primitives for the binary products. 

\begin{definition}
The triple Massey product $\IP{[a_1^0],[a_2^1],[a_3^2]} $  \emph{vanishes} if $ 0 \in \IP{[a_1^0],[a_2^1],[a_3^2]} $.
\end{definition}

The triple Massey product brought up two obvious new features absent in the binary product: the fact that is it partially defined and that its values have ambiguity.

\begin{example}\label{ex:many roads to zero}
There is a third feature of triple products which is more easily overlooked. Namely, that there are many ways to achieve a fixed value for the triple Massey product. Indeed, changing the choice of $a_2^0$ and $a_3^1$ by cocycles $b_2^0$ and $b_3^1$ will not change the cohomology class of the final cocycle as long as
\[[\overline{b_2^0} a_3^2 + \overline{a_1^0} b_3^1] = 0.\] 

So, for example, even the triple product $\IP{0,0,0} = 0$ can be expressed in less trivial ways by picking arbitrary cocycles $b_2^0$ and $b_3^1$.
\end{example}

From the quadruple  product onwards there is also the need to make  {\it uniform of choices}.  This last ingredient sets the way for the general Massey products
%
%
\begin{definition}
The \emph{$n$-Massey product} is a partially defined degree $2-n$ operation,
\[\IP{\cdot,\dots,\cdot}\colon (H^\bullet(\mc{M}))^n \to H^\bullet(\mc{M}).\]
The $n$-product  $ \IP{[a_1^0],[a_2^1],\dots,[a_n^{n-1}]}$ is only defined if the $n-1$-products $ \IP{[a_1^0],\dots,[a_{n-1}^{n-2}]}$  and $ \IP{[a_2^1],\dots, [a_n^{n-1}]}$ vanish uniformly, that is, there are elements $a^i_j$ such that
\begin{equation}\label{eq:defining  data}
d a_j^i = \sum_{k=i+1}^{j-1}\bar{a_k^i}a_j^k\qquad \mbox{  for  } 0\leq i<j\leq n,~(i,j)\neq (0,n).
\end{equation}
Then  the \emph{$n$-Massey product} is the collection of  cohomology classes of the form 
\[\IP{[a_1^0],\dots,[a_n^{n-1}]} := \left[\sum_{k=0}^n\bar{a_k^0} a_n^k\right],\]
obtained by different choices of $\{a^i_j\}_{0\lneq i <j\leq n}$ satisfying \eqref{eq:defining data}
\end{definition}

\begin{definition}
\emph{Massey product data} is any collection of cochains $\{a_j^i\} \in \mc{M}$ with $0\leq i<j \leq n$  and $(i,j)\neq (0,n)$ satifying \eqref{eq:defining  data} and \emph{vanishing Massey product data} is any such collection for which \eqref{eq:defining  data}  also holds for $(i,j)= (0,n)$.
\end{definition}

\begin{remark}\label{rem:matrix massey}
As was observed in \cite{May69}, in the discussion so far, there is a matrix multiplication hidden in plain sight: using the cochains $\{a_j^i\}_{0\lneq i <j\leq n}$ we can form a strictly upper triangular matrix $A$:
\[A^i_j = \begin{cases} a^i_j &\mbox{ for } i <j\\
0& \mbox{ for }i\geq j.
\end{cases}
\]
Then the condition that  $\{a_j^i\}_{0\lneq i <j\leq n}$  is Massey product data is equivalent to
\[dA - \bar{A} A = 
\begin{pmatrix}
0 & 0 & \cdots & da^0_n - \sum^{k=i}_i\bar{a_k^0}a_n^k\\
0 & 0 & \cdots & 0\\
\vdots & \vdots& \ddots & \vdots\\
0 & 0 & \cdots & 0
\end{pmatrix},
\]
that is, the only non-zero entry in $dA - \bar{A} A $ is the one in position $(0,n)$  and it represents the $n$-Massey product $-\IP{a^0_1, \dots, a^{n-1}_n}$.
In particular, {\it vanishing Massey product data} is equivalent to a matrix as above with 
\begin{equation}\label{eq:vanishing Massey}
dA - \bar{A} A =  0
\end{equation}
\end{remark}


\section{From sphere bundles to Massey products and back}\label{sec:interated bundles}

In this section we look into iterated oriented odd sphere bundles taking a cohomological perspective. An iterated oriented odd sphere bundle over a manifold $M$ is a finite sequence of manifolds $(E_j)_{j\leq N}$ such that
\begin{itemize}
\item $E_0 = M$,
\item for each $j$, $E_j \to E_{j-1}$ is an oriented sphere bundle with odd dimensional fiber.
\end{itemize}
From now on we will refer to these bundles simply as  {\it iterated sphere bundles}.

Given the inductive nature of the definition of iterated sphere bundles,  one would expect that we can compute the cohomology of each $E_j$ using an inductive process, but that is not so simple. Indeed, since $\pi_j\colon E_j \to E_{j-1}$ is an oriented sphere bundle, say with fibers $S^{2n_j -1}$, the integral cohomology of $E_j$ can be computed from the integral cohomology of $E_{j-1}$ via the Gysin sequence.
\[ \cdots \to H^i(E_{j-1}) \stackrel{\pi_j^*}{\longrightarrow} H^i(E_j) \stackrel{\pi_{j*}}{\longrightarrow}  H^{i-2n_j+1}(E_{j-1})  \stackrel{\cup [\varepsilon^{2n_j}]}{\longrightarrow}  H^{i+1}(E_{j-1}) \to \cdots,\]
where $[\varepsilon^{2n_j}]$ is the integral Euler class of  $E_j \to E_{j-1}$.

Effectively, if $(C^\bullet(E_{j-1};\Z),d )$ denotes the complex of integral singular cochains on $E_{j-1}$, we can extend  $(C^\bullet(E_{j-1};\Z),d )$ by a formal element $\psi_j$ of degree $2n_j-1$ and extend the coboundary operator by declaring $d \psi_j = \varepsilon_{2n_j}$:
\begin{equation}
(C_{\psi_j}(E_{j-1},\Z),d ): = (\wedge\IP{\psi_j} \tensor _\Z C^\bullet(E_{j-1};\Z),d ) \subset (C^\bullet(E_{j};\Z),d ).
\end{equation}
Because  $(C^\bullet(E_{j-1};\Z),d )$ is a graded algebra, this process generates a quasi-isomorphic subcomplex, but because the product on cochains is not graded commutative, $(C_{\psi_j}(E_{j-1};\Z),d )$ is not naturally a graded algebra, which spoils the inductive process if we insist on integer coefficients.

To iterate this process, and relate the space of cochains on $E_j$ with the space of cochains on $M$ we need that the new elements, $\{\psi_j\}$  graded commute with each other and with elements in $C^\bullet(M)$. This is only the case if we take coefficients in a field without torsion, say, the rationals.  Repeating the process outlined above, we have that the  rational cohomology of $E_j$ is isomorphic to the rational cohomology of the complex
\begin{equation}\label{eq:left transgressive complex}
(C_{\Psi_{\leq j}},d ): = (\wedge\IP{\psi_1,\dots, \psi_j} \tensor C^\bullet(M;\Q),d ), \quad j \leq N,
\end{equation}
where $d  \psi_j  = \varepsilon_{2n_j} \in C_{\Psi_{\leq j-1}} \subset C_{\Psi_{\leq j}}$ is a cocycle in that subcomplex.

\begin{remark}[Hirsch extensions]
The process described above in a geometric context  can be carried out purely algebraically and is called a {\it Hirsch extension}. The general  algebraic situation is as follows.  Given a DGA, $(\mc{M},d)$, over $\Q$, a  graded $\Q$ vector space, $V$, and a degree-one map of $\Q$ vector spaces $d^V\colon V \to \mc{M}$ such that the image of $d^V$ lies in the kernel of $d$, one can form $\wedge V \tensor_\Q\mc{M} $ which is again an $\Q$-algebra and use the operators $d$ and $d^V$ together with the Leibniz rule to produce a differential on  $\wedge V \tensor_\Q\mc{M} $ turning it into a DGA.

Even if $(\mc{M},d)$ has some connection to geometry (e.g., is the cochain complex of a space),  DGAs obtained as Hirsch extensions  of $\mc{M}$ may lack a geometric counterpart. 
\end{remark}

\begin{definition}
A \emph{transgressive generating set} for an iterated sphere bundle, $(E_j)_{j\leq N}$, is a choice of extensions by  $\IP{\psi_j}$ with $d  \psi_j= \varepsilon_{2n_j} \in C_{\Psi_{\leq j-1}}$  for $j =1 \dots, N$ as above.
\end{definition}

\begin{remark}
Given a transgressive generating set, the choice of including the transgressive generators on the left in \eqref{eq:left transgressive complex} was arbitrary and one could equally well form the right complex
\begin{equation}\label{eq:right transgressive complex}
(C^R_{\Psi_{\leq j}},d ): = (C^\bullet(M;\Q)\tensor \wedge\IP{\psi_1,\dots, \psi_j}) ,d ), \quad j \leq N.
\end{equation}
\end{remark}

\begin{definition}
The complexes \eqref{eq:left transgressive complex} and \eqref{eq:right transgressive complex} are respectively the \emph{left} and \emph{right transgressive complexes} of the iterated sphere bundle $(E_j)_{j\leq N}$ endowed with the transgressive generating set $\Psi$.
\end{definition}
Because we are working over the rationals, the difference between left and right transgressive complexes is only a collection of signs that arise by commuting transgressive elements over cochains on $M$.

This section we will only use left transgressive generating sets, so for now we will not stress the positioning of the transgressive generators.

\begin{definition}
Given a choice of transgressive generating set, $\Psi = \IP{\psi_1,\dots, \psi_N}$, an element of $C_{\Psi_{\leq j}}$ has \emph{transgressive level $i$} if it lies in
\[\wedge^{\leq i} \IP{\psi_1, ,\dots, \psi_j} \tensor C^\bullet(M;\Q)  \subset C_{\Psi_{\leq j}}.\]
\end{definition}

In particular, elements in $C_{\Psi_{\leq j}}$ of transgressive level one are of the form
\begin{equation}\label{eq:right transgressive}
c^0 +  \psi_1 c^1 + \dots + \psi_j c^j , \qquad \mbox{ with } c^i \in C^\bullet(M;\Z).
\end{equation}

\begin{definition}\label{def:level i}
An  {\it  $i^{th}$ transgressive level} iterated sphere bundle $(E_j)_{j\leq n}$  is one for which the Euler class of each  $E_j \to E_{j-1}$ is represented by an element of transgressive level $i$ for a choice of transgressive generating set.
\end{definition}

We will focus on iterated sphere bundles of {\it transgressive level one}. One feature of the level-one subcomplex is that it can be defined also for cochains with {\it integral coefficients}
\[\wedge^{\leq 1} \IP{\psi_1, ,\dots, \psi_j} \tensor C^\bullet(M;\Z)  \subset C_{\Psi_{\leq j}}(M;\Z).\]
as in this case one does not have to commute cochains in $M$ over transgressive generators to define the differential. In this situation, the difference between left and right level-one transgressive complexes is more subtle than mere sign changes.

A motivation  to look into fibration of transgressive level 1 comes from the case when all spheres are circles.
\begin{example}[Level-one iterated circle bundles]
Iterated circle bundles over a point are nilmanifolds. Therefore, the fiber over $p\in M$ of an iterated circle bundle $E_j \to M$ is a nilmanifold. This nilmanifold is a torus if and only if for each $j$ the Euler class of $E_j \to E_{j-1}$ vanishes when restricted to the fibers of $E_{j-1} \to M$. This happens precisely if the Euler class of $E_j \to E_{j-1}$ has transgressive degree 1 for all $j$. Therefore,  level-one iterated circle bundles are precisely those whose fibers $E_N \to M$ are tori.

Notice however that $E_N \to M$ is not a principal torus bundle as those would correspond to level-zero iterated circle bundles. 
\end{example}

In general, being a level-one iterated sphere bundle implies that the fibers of $E_N \to M$  have the same cohomology algebra as
\[S^{2n_1-1}\times \dots \times S^{2n_N-1},\]
but these conditions are not equivalent as we illustrate next.

\begin{example} Let $M^{2n}$ be a connected $2n$-dimensional compact manifold. For $j=1,2$ let $ \varepsilon_j$ be a cocycle in $C^{2n_j}(M;\Z)$ which is the Euler class of a sphere bundle. Let $\sigma \in C^{2n}(M;\Z)$ be a generator of $H^{2n}(M;\Z)$ and let $E_3 \to M$ be the iterated sphere bundle for which
\begin{itemize}
\item  $E_1\to M$ has Euler class $[\varepsilon_1]$,
\item $E_2 \to E_1$ has Euler class  $[\varepsilon_2]$,
\item $E_3 \to E_2$ has Euler class $[m \psi_1 \psi_2 \sigma]$,
where $d  \psi_i = \varepsilon_i$ and $m$ is an appropriate large integer so that the integral closed cochain $m \psi_1 \psi_2 \sigma$ represents the Euler class of a sphere bundle \cite{BV03}.
\end{itemize}
Then $E_3 \to M$ has fibers $S^{2n_1-1}\times S^{2n_2-1}\times S^{2(n+n_1+n_2)-3}$ but is not a  level-one iterated sphere bundle.
\end{example}

The question we pose is:

\begin{question}
How can one describe a level-one iterated sphere bundle over a fixed base manifold.
\end{question}

%

The first significant step to arrive at an answer to the question above relates cocycles of transgressive degree one to Massey products:

\begin{proposition}\label{prop:iterated => Massey}
Let $(E_j)_{j\leq N}$  be a level-one iterated sphere bundle with respect to a transgressive generating set $\Psi = \{\psi_1, \dots, \psi_N\}$,  with Euler forms $d\psi_j = \varepsilon_{2n_j} \in C_{\Psi_{<j}}(M;\Z)$,  and let $H \in C_{\Psi}^l(M;\Z)$ be a level-one $l$-cocycle so we have
\begin{equation}\label{eq:psi and H}
\begin{aligned}
d\psi_j & = \varepsilon_{2n_j} = a_j^0  +\sum_{i=1}^{j-1} \psi_i a_j^i,\\
H&= a_{N+1}^0 + \sum_{i=1}^{N} \psi_i a_{N+1}^i.
\end{aligned}
\end{equation}
Then, the collection $\{a_j^i\}_{0 \leq i <j\leq N+1}$ is vanishing Massey product data for the product
\[\IP{[a^0_1],\dots, [a^N_{N+1}]}\]
\end{proposition}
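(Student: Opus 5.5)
The plan is to set $\psi_{N+1}$ formally so that $H$ plays the role of an Euler class $d\psi_{N+1}=H$, and to show that the single family of identities $d(d\psi_j)=0$, for $j=1,\dots,N+1$, unpacks level by level into exactly the relations \eqref{eq:defining  data} for all $0\le i<j\le N+1$, including $(i,j)=(0,N+1)$. Since $d\psi_j=\varepsilon_{2n_j}$ and $H$ are cocycles, we have $d\varepsilon_{2n_j}=0$ and $dH=0$. Each of these expressions lies in the level-one subcomplex $\wedge^{\le1}\IP{\psi_1,\dots,\psi_N}\tensor C^\bullet(M;\Z)$. In that subcomplex an element $c^0+\sum_i\psi_i c^i$ vanishes if and only if each $c^i\in C^\bullet(M;\Z)$ vanishes. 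This holds because the subcomplex is a free $C^\bullet(M;\Z)$-module on $1,\psi_1,\dots,\psi_N$.

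The key computation is $d$ applied to a level-one element $\varepsilon_{2n_j}=a_j^0+\sum_{k<j}\psi_k a_j^k$. By the Leibniz rule, written with the sign operator in \eqref{eq:properties}, we get
\[
d(\psi_k a_j^k)=(d\psi_k)a_j^k-\bar{\psi_k}\,da_j^k=\Big(a_k^0+\sum_{i<k}\psi_i a_k^i\Big)a_j^k+\psi_k\,d a_j^k,
\]
where $\bar{\psi_k}=-\psi_k$ because $\psi_k$ has odd degree $2n_k-1$. Note that $\psi_i a_k^i a_j^k$ remains of level one, since only cochains on $M$ are multiplied. So no commutation of $\psi$'s past cochains is needed, and the computation is valid over $\Z$. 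Collecting terms gives
\[
0=d\varepsilon_{2n_j}=\Big(da_j^0+\sum_{k<j}a_k^0a_j^k\Big)+\sum_{i<j}\psi_i\Big(da_j^i+\sum_{i<k<j}a_k^ia_j^k\Big).
\]
Setting each coefficient to zero yields $da_j^i=-\sum_{i<k<j}a_k^ia_j^k$ for all $0\le i<j$. Running the same computation on $H$, with $j=N+1$, gives the relations for $j=N+1$, including $i=0$; this is the vanishing condition.

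The main obstacle is the sign bookkeeping, which must match the convention $\sum\bar{a_k^i}a_j^k$ in \eqref{eq:defining  data}, or equivalently $dA=\bar AA$ in Remark \ref{rem:matrix massey}. The raw relations carry $-a_k^ia_j^k$ instead of $\bar{a_k^i}a_j^k=(-1)^{|a_k^i|+1}a_k^ia_j^k$, so I must check degrees. From $d\psi_k=a_k^0+\sum\psi_ia_k^i$ we have $|a_k^i|=2n_k-|\psi_i|=2n_k-2n_i+1$, which is odd for $i\ge1$. For $i=0$ we have $|a_k^0|=2n_k$, which is even. In the $i\ge1$ case $\bar{a_k^i}=a_k^i$, so there is a sign discrepancy. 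I would resolve it either by absorbing a sign into the identification, for instance replacing $a_j^i$ by $-a_j^i$ or $\bar{a_j^i}$ uniformly in a way compatible with the matrix identity, or by recomputing with the precise sign convention of the level-one product.

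If the uniform rescaling does not fix all signs simultaneously, I would fall back to the matrix form. Define $\Psi=(1,\psi_1,\dots,\psi_N)$ and write the hypotheses \eqref{eq:psi and H} as $d\Psi=\Psi A$ with $A$ strictly upper triangular. Then $0=d^2\Psi=(d\Psi)A\pm\Psi\,dA$ gives $\Psi(AA\pm dA)=0$, and hence $dA=\pm\bar AA$ after tracking the parity of entries. Once the signs are settled, the conclusion is immediate from the definition of vanishing Massey product data.
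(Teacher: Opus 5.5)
Your strategy is the paper's proof written entrywise rather than in matrix form. You apply $d$ to $\varepsilon_{2n_j}$ and to $H$, expand by Leibniz, and read off coefficients in the free $C^\bullet(M;\Z)$-module on $1,\psi_1,\dots,\psi_N$. As written, however, the proof has a sign error, and that error creates the ``obstacle'' you then leave unresolved.

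With the paper's convention $\bar a=(-1)^{|a|+1}a$, an odd element satisfies $\bar{\psi_k}=+\psi_k$, not $-\psi_k$. Equivalently, by the ordinary graded Leibniz rule for odd $\psi_k$, $d(\psi_k a_j^k)=(d\psi_k)a_j^k-\psi_k\,da_j^k$. Your displayed formula has $+\psi_k\,da_j^k$ instead. Consequently your relations $da_j^i=-\sum_{i<k<j}a_k^ia_j^k$ are false for $i\ge 1$: they contradict the true ones unless the quadratic sum vanishes.

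The remedies you propose do not work.
\begin{itemize}
\item Uniformly re-signing the $a_j^i$ flips the linear term $da_j^i$ but not the quadratic term. It would only move the mismatch from the rows $i\ge1$ to the row $i=0$, which was correct.
\item The proposition concerns the given collection $\{a_j^i\}$, not a re-signed one.
\item The matrix fallback ``$dA=\pm\bar AA$ after tracking parity'' stops exactly where the content of the proof lies.
\end{itemize}

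With the correct sign the computation closes at once:
\[
0=d\varepsilon_{2n_j}=\Bigl(da_j^0+\sum_{0<k<j}a_k^0a_j^k\Bigr)+\sum_{0<i<j}\psi_i\Bigl(\sum_{i<k<j}a_k^ia_j^k-da_j^i\Bigr).
\]
The same expansion applies to $dH=0$ with $j=N+1$. Hence
\[
da_j^0=-\sum_k a_k^0a_j^k,\qquad da_j^i=\sum_k a_k^ia_j^k\quad(i\ge 1).
\]
These are exactly $da_j^i=\sum_k\bar{a_k^i}a_j^k$, by your own degree count: $|a_k^0|=2n_k$ is even, so $\bar{a_k^0}=-a_k^0$, and $|a_k^i|$ is odd for $1\le i<k\le N$, so $\bar{a_k^i}=a_k^i$.

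Only entries $a_k^i$ with $k\le N$ ever carry a bar, so the parity of $H$ and of the $a_{N+1}^k$ plays no role. This is the entrywise form of the paper's observation that the possibly odd last entry of the row vector $\Psi^tA$ is annihilated by the zero last row of $A$. There is no genuine sign discrepancy to absorb; it came from evaluating $\bar{\psi_k}$ with the wrong sign.
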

\begin{proof}
For convenience of notation, we set $\psi_0 =1$, so  formula \eqref{eq:psi and H} for the Euler forms, $\varepsilon_{2n_j}$, and $H$  is more uniform:
 \begin{equation}\label{eq:euler classes and H}
 \varepsilon_{2n^j} = \sum_{i=0}^{j-1} \psi_i a_j^i \quad \mbox{ and } \quad H =  \sum_{i=0}^{N} \psi_i a_j^i .
 \end{equation}

We can express the collection of Euler forms and the extra form, $H$, as a vector, via a matrix equation. Indeed, if we let
\[\begin{pmatrix}\Psi \\ 0\end{pmatrix} = \begin{pmatrix}\psi_0\\\vdots \\ \psi_N\\0\end{pmatrix}\quad \mbox{ and } \quad \begin{pmatrix} \mathbf{0}\\ \bar{H}\end{pmatrix} = \begin{pmatrix}0\\\vdots \\  0 \\ \bar{H}\end{pmatrix} ,\]
then the expression \eqref{eq:euler classes and H} for the Euler forms and $H$ becomes
\begin{equation}\label{eq:the matrix A}
d\begin{pmatrix}\Psi \\0 \end{pmatrix}^t +\begin{pmatrix}\mathbf{0} \\H \end{pmatrix}^t= \begin{pmatrix}\Psi \\0 \end{pmatrix}^t A ,
\end{equation}
where  $t$ denotes transposition and 
\[A = 
 \begin{pmatrix} 0  & a^0_1 & a^0_2 & \cdots & a^0_N & a^0_{N+1}\\
0  & 0 & a^1_2 & \cdots & a^1_N & a^1_{N+1}\\
0  & 0 & 0& \cdots & a^2_N & a^2_{N+1}\\
\vdots  & \vdots & \vdots & \ddots &\vdots\\
0  & 0 & 0 &\cdots &0  & a^{N}_{N+1}\\
0  & 0 & 0 &\cdots &0  & 0
 \end{pmatrix}.
 \]
The condition that the Euler forms and $H$ are cocycles yields
\begin{equation}\label{eq:sphere to massey}
\begin{aligned}
0 & =  d\left(\begin{pmatrix}  d\Psi \\ 0\end{pmatrix}^t+ \begin{pmatrix}\mathbf{0} \\H \end{pmatrix}^t\right)\\
& =d \left(\begin{pmatrix}\Psi \\0 \end{pmatrix}^t A \right)\\
& =\left(d \begin{pmatrix}\Psi \\ 0\end{pmatrix}\right) ^tA - \begin{pmatrix}\bar{\Psi} \\ 0\end{pmatrix}^t dA\\
& =\left( \begin{pmatrix}\Psi \\ 0\end{pmatrix}^t A - \begin{pmatrix}\mathbf{0} \\H \end{pmatrix}^t\right) A - \begin{pmatrix}\bar{\Psi} \\ 0\end{pmatrix}^tdA\\
& =\begin{pmatrix}\Psi \\ 0\end{pmatrix}^t A  A - \begin{pmatrix}\bar{\Psi} \\ 0\end{pmatrix}^tdA\\
\end{aligned}
\end{equation}
where we have used properties \eqref{eq:properties} at several places, we used \eqref{eq:the matrix A}  in the second and fourth equality and  in the last equality we used that  $\begin{pmatrix}\mathbf{0} \\H \end{pmatrix}^t A = 0$ because the last row of $A$ only has zeros.

Finally we observe that all the entries of the vector
\[\begin{pmatrix}\Psi \\ 0\end{pmatrix}^t A\]
are even forms, except maybe for the last, which depends on the degree of $H$.  And since the last entry is annihilated when we take the product 
\[\begin{pmatrix}\Psi \\ 0\end{pmatrix}^t A A,\]
we have that
\[\begin{pmatrix}\Psi \\ 0\end{pmatrix}^t AA =-\left(\overline{ \begin{pmatrix}\Psi \\ 0\end{pmatrix}^t A} \right)A=  \begin{pmatrix}\bar\Psi \\ 0\end{pmatrix}^t \bar{A} A.\]
Hence \eqref{eq:sphere to massey} becomes
\[ \begin{pmatrix}\bar\Psi \\ 0\end{pmatrix}^t (\bar{A} A-dA)= 0.\]

The equation above shows that $ dA -\overline{A} A =0$ , hence $A$  corresponds to   vanishing Massey product data  for the Massey product $\IP{[a^0_1],\dots, [a^N_{N+1}]}$, as in Remark \ref{rem:matrix massey}.
\end{proof}

From now on, we will often write \eqref{eq:the matrix A} in place of the equivalent and more notation heavy equations \eqref{eq:psi and H}.

\begin{remark}
In the Massey product produced above, $a_1^0$ is an even cocycle, since it is a representative for the Euler class of an odd sphere bundle. The cocycles $a_j^{j-1}$ are odd for $j\leq N$  as they are the push-forward of even classes along an odd-dimensional fibrations. The final cocycle $a_{N+1}^N$ can be either even or odd, depending on whether $H$ is an odd or even cocycle, respectively.
\end{remark}

It is important to notice that the iterated sphere bundle gives rise to vanishing Massey product data which contains much more information than the Massey product alone.

\begin{example}
Consider the following 2-iterated circle bundle over the 3-torus, $T$ (which has cohomology quasi-isomorphic to $(\wedge\{\theta_1,\theta_2,\theta_3\},d=0)$):
\begin{itemize}
\item Let $E_1 \to T$ be the trivial circle bundle and let $\psi_1$ with $d\psi_1 = 0$ be the global angular form for $E_1$;
\item Let $E_2 \to E_1$ be the circle bundle with Chern class $\theta_1 \theta_2$, that is, we let  $\psi_2$ with $d\psi_2 = \theta_1 \theta_2$ be the global angular form for $E_2$;
\item Let $H  = \psi_1 \theta_2 \theta_3$ be the cocycle in $E_2$.
\end{itemize}
Then, the following the recipe from Proposition \ref{prop:iterated => Massey}, we have
\[[a_1^0 ]= [d\psi_1]=0,\quad [a_2^1] = \pi_{1*}[d\psi_2]  =0, \quad \mbox{ and } [a_3^2] = \pi_{2*}[H]  =  0,\]
but the Massey product data obtained from this bundle is
\[ a_2^0 = \theta_1 \theta_2\qquad a_3^1 = \theta_2\theta_3.\]
So nontrivial bundles can still be associated to the trivial product, but via nontrivial data as illustrated in Example \ref{ex:many roads to zero}.
\end{example}

We obtain a converse to Proposition \ref{prop:iterated => Massey} in two steps. First we deal with the algebraic aspect which corresponds to the study of  iterated Hirsch extensions by odd elements. Aftewards we  get around the distinction between (integral) cohomology classes and  Euler classes of oriented sphere bundles. 

\begin{proposition}\label{prop:Massey => iterated 1}
Let $M$ be a manifold and  $\{a^i_j\}_{1\leq i < j \leq  N+1}$ be vanishing Massey product data. Assume further that
\begin{itemize}
\item  $a_1^0\in C^\bullet(M;\Z)$ is an even cocycle on $M$,
\item  for $i=1,\dots, N$,  $a_{i}^{i-1} \in C^\bullet(M;\Z)$ are odd cocycles and
\item  $a_{N+1}^N\in C^\bullet(M;\Z)$ is a cocycle.
\end{itemize}
Then  we let $C_0 =  C^\bullet(M;\Z)$  and for $j =1, \dots, N$ and form successive Hirsch extensions $C_j$  of $C_{j-1}$ by $V_j= \IP{\psi_j}$
where we declare
\[d \psi_j = a_j^0 +  \sum_{i=1}^{j-1} \psi_i a_j^i.
\]
Then 
\[H = a_{N+1}^0 +  \sum_{i=1}^{N} \psi_i a_{N+1}^i.\]
is a cocycle in $C_{N}$.
\end{proposition}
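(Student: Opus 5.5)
The plan is to run the computation in the proof of Proposition \ref{prop:iterated => Massey} backwards, one column of the matrix $A$ at a time, by induction on $j$. Set $\psi_0=1$ and write $\eta_j := \sum_{i=0}^{j-1}\psi_i a_j^i$ for $1\le j\le N+1$. Then $d\psi_j=\eta_j$ for $j\le N$, and $\eta_{N+1}=H$.

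The single claim to prove is that $d\eta_j=0$ in $C_{j-1}$ for every $1\le j\le N+1$, given that $C_{j-1}$ is already defined. This claim does two jobs at once.
\begin{itemize}
\item For $j\le N$, it is exactly the hypothesis needed for the Hirsch extension $C_j$ to exist. The differential is extended by the Leibniz rule with $d\psi_j=\eta_j$, and $d^2=0$ on the new generator requires $\eta_j$ to be a cocycle.
\item For $j=N+1$, it is the conclusion that $H$ is a cocycle.
\end{itemize}

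Before computing, I would settle two preliminary points.

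First, parities. Massey product data is homogeneous, so $|a_j^i| = |a_k^i|+|a_j^k|-1$. Combined with the hypotheses ($a_1^0$ even, $a_i^{i-1}$ odd for $i\le N$), induction on $j-i$ gives: $a_j^i$ is odd for $1\le i<j\le N$, and $a_j^0$ is even for $j\le N$. Consequently each $\eta_j$ with $j\le N$ is even, so $\psi_j$ is odd. Hence $\bar\psi_i=\psi_i$ for $i\ge1$, while $\bar\psi_0=-1$.

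Second, integrality. Every product we form has the shape $\psi_i\psi_k c$ or $\psi_i c$ with $c\in C^\bullet(M;\Z)$, and the transgressive generators always stay on the left. So no cochain ever has to be commuted past a $\psi$, and the computation makes sense over $\Z$.

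The main computation uses the Leibniz rule in the form \eqref{eq:properties}, together with the inductive knowledge $d\psi_i=\eta_i$ for $i<j$:
\[
d\eta_j=\sum_{i=0}^{j-1}(d\psi_i)\,a_j^i-\sum_{i=0}^{j-1}\bar\psi_i\,da_j^i
=\sum_{1\le i<j}\ \sum_{0\le k<i}\psi_k a_i^k a_j^i-\sum_{i=0}^{j-1}\bar\psi_i\sum_{i<k<j}\bar{a_k^i}a_j^k ,
\]
where the second equality substitutes the vanishing Massey relation \eqref{eq:defining  data}. Since $(i,j)=(0,N+1)$ is included, this relation applies for every $i<j$, including $j=N+1$.

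The two double sums should then cancel term by term after relabelling $(k,i)\leftrightarrow(i,k)$. The case split is by the outer index in the second sum.
\begin{itemize}
\item For $i\ge1$, use $\bar\psi_i=\psi_i$ and $\bar{a_k^i}=a_k^i$, valid because $a_k^i$ is odd. The term $-\psi_i a_k^i a_j^k$ then cancels against the term with $(k,i)$ swapped in the first sum.
\item For $i=0$, use $\bar\psi_0=-1$ and $\bar{a_k^0}=-a_k^0$, valid because $a_k^0$ is even for $k<j\le N+1$. The term becomes $-a_k^0a_j^k$, which cancels the $\psi_0$ terms of the first sum.
\end{itemize}
In matrix language this is the identity $P^t(AA)-\bar P^t\,dA = \bar P^t(\bar A A-dA)=0$ from the previous proof, restricted to column $j$. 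Here $P$ is the column vector $(\psi_0,\dots,\psi_N,0)^t$, so $P^t$ is the corresponding row vector.

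I expect the main obstacle to be sign bookkeeping rather than anything conceptual. Specifically, I must confirm that the parity claims hold for all entries appearing in column $j$, including the $k<j$ entries in the column $N+1$ computation, where $H$'s own parity is arbitrary but never enters. I also need to check that the sign of $d(\psi_i a)$ under the integral, non-commutative cochain product is exactly the one encoded by \eqref{eq:properties}. I would verify the case $N=1$ (and $j=2$) by hand first to fix conventions, then run the induction.
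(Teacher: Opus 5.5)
Your proposal is correct and is essentially the paper's own argument: the paper proves this by saying to reverse the matrix computation of Proposition \ref{prop:iterated => Massey}, and your column-by-column induction is that computation restricted to column $j$. Your version also makes explicit two points the paper leaves implicit: each $d\psi_j=\eta_j$ must be checked to be a cocycle in $C_{j-1}$ before $C_j$ can be formed, and only the parities of entries in columns $\le N$ are needed (this rests on the implicit homogeneity of the data and on reading the oddness hypothesis as applying to $a_i^{i-1}$ for $2\le i\le N$).
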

Notice that we are claiming that the proposed expressions for $d \psi_j$  and $H$ are  cocycles. The proof  is just a matter of reversing the steps of the previous proposition.

Notice that in the proposition above the cocycle $d\psi_1$ corresponds to an integral cohomology class on $M$, but it may not correspond to the Euler class of a sphere  bundle over $M$. The same observation applies to the cocycle $d\psi_j$  in the complex $C_{j-1}$, so the algebraic construction above may not correspond to an iterated sphere bundle.   Next, we tackle the question of  how to bridge this gap.

\begin{proposition}
In the same situation as in Proposition \ref{prop:Massey => iterated 1}, let $A = (a^i_j)$ be the matrix containing the vanishing Massey product data and for constants $c_j$, $j=0,1,\dots N-1$,  let 
\begin{equation}\label{eq:P}
P = \begin{pmatrix}
1 & 0 & 0 &\dots &0 & 0\\
0 & c_1 & 0 &\dots &0 & 0\\
0 & 0 & c_1 c_2  &\dots &0 &0\\
\vdots& & & \ddots& &\vdots\\
0 & 0 & 0 & \dots &  c_1 c_2 \dots c_{N-1} &0\\
0 & 0 & 0 & \dots &  0 & 1
\end{pmatrix}.
\end{equation}
Then  there is a collection of  nonzero natural numbers $c_j$, $j=0,1,\dots N-1$,  for which
\[P^{-1} A P\]
 is the data of a vanishing Massey product  arising from a level-one iterated sphere bundle as described in Proposition \ref{prop:iterated => Massey}. If  for all $j \leq N$ the class $[d\psi_j]\in C_{\Psi^<j}(M;\Z)$  is Euler class of sphere bundle over $E_{j-1}$, we can take $c_j=1$ .
\end{proposition}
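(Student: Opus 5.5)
The plan is to show two things: conjugating by the constant diagonal matrix $P$ preserves the vanishing condition \eqref{eq:vanishing Massey}, and it amounts geometrically to rescaling the transgressive generators. Then I will choose the $c_j$ inductively, using the fact that a large enough multiple of an integral even class is an Euler class.

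\emph{Step 1: the algebra.} Write $P=\mathrm{diag}(p_0,\dots,p_{N+1})$. Here $p_0=1$, $p_j=c_1\cdots c_j$ for the middle indices, and $p_{N+1}=1$. Set $B=P^{-1}AP$, so $b^i_j=a^i_j\,p_j/p_i$. Since $P$ is a constant matrix of degree-zero scalars, we have $dB=P^{-1}(dA)P$. The sign operator also commutes with scalar multiplication, so $\bar B=P^{-1}\bar A P$ and hence $\bar B B=P^{-1}\bar A A P$. Therefore $dB-\bar BB=P^{-1}(dA-\bar AA)P=0$, and $B$ is again vanishing Massey product data. The diagonal entries $b^{i-1}_i$ are integer multiples of $a^{i-1}_i$, so they keep the parities required in Proposition \ref{prop:Massey => iterated 1}. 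For $i<j\le N$, the entries $b^i_j$ are integral because $p_i\mid p_j$.

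\emph{Step 2: interpretation.} Put $\psi'_i=p_i\psi_i$. A direct substitution in \eqref{eq:the matrix A} shows
\[d\psi'_j=p_j\,d\psi_j=\sum_{i<j}\psi'_i\,b^i_j .\]
So the Hirsch extensions of Proposition \ref{prop:Massey => iterated 1} built from $B$ are the same extensions as those built from $A$, but with Euler cocycles multiplied by $p_j$. Likewise $H'=\sum_i\psi'_i b^i_{N+1}=H$, so the background cocycle is unchanged. Over $\Q$ this is clear. Over $\Z$, the last column $b^i_{N+1}=a^i_{N+1}/p_i$ need not be integral. This is a point to be checked carefully. I would handle it either by regarding $H$ through the generators $\psi_i$, or by absorbing an extra factor into the last diagonal entry if integrality of $B$ itself is demanded.

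\emph{Step 3: induction.} Assume $M$ is compact, so that each $E_{j-1}$ is a compact manifold. Suppose $c_1,\dots,c_{j-2}$ have been chosen so that $E_1,\dots,E_{j-1}$ exist as genuine sphere bundles, with global angular forms $\psi'_1,\dots,\psi'_{j-1}$. The cocycle $\sum_{i<j}\psi'_i a^i_j\,p_{j-1}/p_i$ is an integral even-degree cocycle on $E_{j-1}$. By the realization result cited earlier \cite{BV03}, some nonzero integer multiple $c_{j-1}$ of its class is the Euler class of an oriented odd-sphere bundle $E_j\to E_{j-1}$; this chooses $c_{j-1}$. Later choices $c_j,c_{j+1},\dots$ only rescale columns of index greater than $j-1$, so they do not disturb the earlier stages. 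After $N$ steps, Proposition \ref{prop:iterated => Massey} applied to $(E_j)$ with generators $\psi'_j$ returns exactly $B=P^{-1}AP$.

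If each $[d\psi_j]$ is already an Euler class, we may take every $c_j=1$ at each step, so that $P=I$. The main obstacle is Step 3: invoking the realization theorem on the total spaces $E_{j-1}$, which requires compactness or finite-dimensionality, and keeping track of the integrality and indexing of the last column.
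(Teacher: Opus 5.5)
Your Steps 1–3 follow the paper's argument. Conjugation by the constant diagonal $P$ preserves $dA-\bar AA=0$, it amounts to replacing $\psi_j$ by $\psi'_j=p_j\psi_j$, and the $c_j$ are chosen one stage at a time so that each rescaled cocycle is an Euler class on $E_{j-1}$.

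You go beyond the paper on two points: finiteness of $E_{j-1}$ for the realization result, and integrality of the last column. You are right to worry about both. The second is a real defect that the paper's proof also misses. The paper asserts that $P^{-1}AP$ has integral entries, but with last diagonal entry $1$ the last column is $a^i_{N+1}/p_i$. This genuinely fails. Take $M=S^6$, $N=1$, $a^0_1=a^1_2=\sigma$ a generating cocycle, and $a^0_2$ an integral primitive of $\bar\sigma\sigma$ (one exists since $H^{12}(S^6;\Z)=0$). Every oriented $S^5$-bundle over $S^6$ has even Euler number, so $c_1\geq 2$ is forced, and $\sigma/c_1$ is not an integral cochain. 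So the statement as printed holds only if $H$ is allowed rational coefficients.

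Of your two remedies, only the second works.

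\emph{First remedy fails.} Keeping $H$ expressed through the old $\psi_i$ is not available. On the tower you actually construct, the integral angular cochains are the $\psi'_i$, and $\psi_i=\psi'_i/p_i$ is merely rational. This just reproduces the non-integral last column.

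\emph{Second remedy works.} Take the last diagonal entry of $P$ to be $p_N$, i.e.\ replace $H$ by $p_NH$. Then $b^i_{N+1}=a^i_{N+1}p_N/p_i$ is integral because $p_i\mid p_N$, and the rest of your argument goes through unchanged. This departs from the statement's requirement that the last entry of $P$ be $1$.

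One small slip in Step 3: with your convention $p_j=c_1\cdots c_j$, building $E_1,\dots,E_{j-1}$ uses $c_1,\dots,c_{j-1}$. The multiple chosen at stage $j$ is therefore $c_j$ (since $p_j=c_jp_{j-1}$), not $c_{j-1}$.
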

\begin{proof}
Letting $\tilde{A} = P^{-1}A P$. The matrix $\tilde{A}$  is also a strictly  upper  triangular matrix whose entries are integral cochains and satisfies
\[d\tilde{A} - \bar{\tilde A} \tilde{A} = P^{-1}(dA - \bar{A} A) P = 0.\]
Hence $\tilde{A}$ is also Massey product vanishing data but now for $c_i  a_{i-1}^{i}$. Following the previous proposition, the successive Hirsch extensions corresponding to $\tilde{A}$ are related to the extensions corresponding to $A$ by
\[d\tilde{\psi}_j = (\Pi_{i=1}^jc_i) d \psi_j.\]
Since given an integral cohomology class, there is always a multiple of it which is the Euler class of a sphere bundle \cite{BV03},  we  can pick  $c_1$ so that class $d\tilde\psi_1 = c_1d\psi_1$ corresponds to the Euler class of a sphere bundle and  then  pick  $c_2$ so that $d\tilde\psi_2  = c_2 c_1d\psi_2$ corresponds to a class of a sphere bundle and so on.  Since the class $[H]$ does not have to arise from a sphere bundle, we do not need to scale it, so we can take the last entry of the matrix $P$  to be 1.
%
%
%
\end{proof}

For ease of reference, we add up the results of the last  three propositions under one roof:
\begin{theorem}\label{theo:iterated <=> left Massey}
Let $(E_j)_{j\leq N}$  be a level-one iterated sphere bundle with respect to a transgressive generating set $\Psi = \{\psi_{1}, \dots, \psi_N\}$, 
and let $H \in C_{\Psi}^l$ be a level-one $l$-cocycle. Letting $\psi_0 = 1$, we have
\[
d\begin{pmatrix}\Psi \\0 \end{pmatrix}^t +\begin{pmatrix}\mathbf{0} \\H \end{pmatrix}^t= \begin{pmatrix}\Psi \\0 \end{pmatrix}^t A .
\]
Then, the matrix $A$ satisfies 
\[dA - \bar{A} A =0,\]
that is, it corresponds to vanishing Massey product data. 

Conversely,  given a matrix $A = (a^i_j)$, with  $a^i_j \in C^\bullet(M;\Z)$,  corresponding to a vanishing Massey product with
\begin{itemize}
\item  $a_1^0$ an even cocycle on $M$,
\item  for $i=1,\dots, N$,  $a_{i}^{i-1}$  odd cocycles and
\item  $a_{N+1}^N$ a cocycle.
\end{itemize}
then there is a collection of  nonzero natural numbers $c_j$, $j=1,\dots N-1$,  for which
\[P^{-1}A P\]
 is the data of a vanishing Massey product  arising from a level-one iterated sphere bundle, where $P$ is given by \eqref{eq:P}.
 
The matrix $A$ itself corresponds to the Massey product data of an interated sphere bundle if the classes \eqref{eq:psi and H} are themselves representatives of Euler classes of sphere bundles.
\end{theorem}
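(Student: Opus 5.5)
This theorem collects the three preceding propositions, so the plan is to assemble them and check that their hypotheses match. The statement has three parts.

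For the first part, take the level-one data $d\psi_j=\varepsilon_{2n_j}$ and the cocycle $H$. I will expand them in the basis $\psi_0=1,\psi_1,\dots,\psi_N$. This is possible precisely because each Euler form and $H$ has transgressive level one. It defines the strictly upper triangular matrix $A$ with
\[
d\begin{pmatrix}\Psi \\0 \end{pmatrix}^t +\begin{pmatrix}\mathbf{0} \\H \end{pmatrix}^t= \begin{pmatrix}\Psi \\0 \end{pmatrix}^t A .
\]
Then I invoke Proposition \ref{prop:iterated => Massey}. Applying $d$ to both sides and using $d^2=0$, $dH=0$ and the sign rules \eqref{eq:properties} gives
\[
\begin{pmatrix}\bar\Psi \\ 0\end{pmatrix}^t(\bar A A - dA)=0 .
\]
The point to check is that this vector equation forces $dA-\bar AA=0$. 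Each entry of the left-hand side is $\sum_i \bar\psi_i\, b_j^i$ with $b_j^i\in C^\bullet(M)$. The $\psi_i$ are free generators of the Hirsch extension over $C^\bullet(M)$, so every coefficient $b_j^i$ must vanish. Hence $A$ is vanishing Massey product data, by Remark \ref{rem:matrix massey}.

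For the converse, start from $A$ with the stated parities. Proposition \ref{prop:Massey => iterated 1} provides the successive Hirsch extensions with $d\psi_j=a_j^0+\sum_{i<j}\psi_i a_j^i$. The main thing to verify is $d(d\psi_j)=0$ in $C_{j-1}$, so that each extension is well defined, and then $dH=0$. I will argue by induction on $j$, running the computation of Proposition \ref{prop:iterated => Massey} backwards. Assuming $C_{j-1}$ is a well-defined complex, $d(d\psi_j)$ equals column $j$ of $(\bar\Psi)^t(\bar AA-dA)$, which is zero. The parity assumptions are exactly what make $\Psi^tAA=(\bar\Psi)^t\bar AA$ hold: the entries of $\Psi^tA$ are even, except possibly the last one, and that one is killed by the zero last row of $A$.

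Next, to pass from integral classes to genuine Euler classes, I conjugate by the diagonal matrix $P$ of \eqref{eq:P}. Since $P$ is diagonal with integer constants, $\tilde A=P^{-1}AP$ satisfies $d\tilde A-\bar{\tilde A}\tilde A=P^{-1}(dA-\bar AA)P=0$. Its entries are $a^i_j$ multiplied by the ratios $(c_1\cdots c_{j-1})/(c_1\cdots c_{i-1})$. These ratios are integers when $i<j$, so $\tilde A$ stays integral.

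The step I expect to be the real obstacle is choosing the $c_j$ inductively. At stage $j$ the class of $d\tilde\psi_j$, which is $c_1\cdots c_j\, d\psi_j$ up to the rescaling of generators, has to be the Euler class of an oriented sphere bundle over the already constructed manifold $E_{j-1}$. For this I invoke the result of \cite{BV03} that some nonzero multiple of any integral even class is an Euler class. That gives $E_j$ and identifies $C_j$ with the transgressive model of $E_j$, and then the induction proceeds. The last diagonal entry of $P$ is $1$ because $H$ does not need to be an Euler class.

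Finally, if every class in \eqref{eq:psi and H} is already an Euler class, all $c_j=1$ work, so $P=\mathrm{Id}$ and $A$ itself arises from an iterated sphere bundle.
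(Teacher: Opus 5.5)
Your first two parts are the paper's argument: the theorem just assembles the three preceding propositions. Your freeness argument, which extracts $dA-\bar AA=0$ from the row-vector identity $(\bar\psi_0,\dots,\bar\psi_N,0)(\bar AA-dA)=0$, fills in a step the paper leaves implicit.

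\textbf{The gap is in the rescaling step.} For a constant diagonal $P=\mathrm{diag}(p_0,\dots,p_{N+1})$ one has $(P^{-1}AP)^i_j=(p_j/p_i)\,a^i_j$. You keep $p_{N+1}=1$, as the paper does. So the last column becomes $\tilde a^i_{N+1}=a^i_{N+1}/p_i$ for $i\geq 1$. This is not an integral cochain once some $c_k>1$ is needed and $a^i_{N+1}$ takes a value not divisible by $p_i$. Hence your sentence ``these ratios are integers when $i<j$, so $\tilde A$ stays integral'' is false for $j=N+1$. The new $\tilde H=\tilde a^0_{N+1}+\sum_i\tilde\psi_i\,\tilde a^i_{N+1}$ is then only a rational level-one cocycle, not an integral one as in Proposition \ref{prop:iterated => Massey}. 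The paper's own proof asserts the same integrality (``whose entries are integral cochains''), so you have inherited the slip rather than introduced it, but the step still fails as written.

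There are two honest repairs.
\begin{itemize}
\item Put $c_1\cdots c_N$ (or any common multiple of the $p_i$) in the last diagonal slot of $P$. This means replacing $H$ by the integral multiple $(c_1\cdots c_N)H$; $H$ need not be an Euler class, but it must stay integral. Then every entry is $\tilde a^i_j=(c_{i+1}\cdots c_j)\,a^i_j$, with $c_{N+1}:=1$. The identity $d\tilde A-\bar{\tilde A}\tilde A=P^{-1}(dA-\bar AA)P=0$ still holds because $P$ is constant.
\item Keep $p_{N+1}=1$ and say explicitly that $\tilde H$ may have rational coefficients. The statement's $H\in C^l_\Psi$, with $C_\Psi$ built over $\Q$, tolerates this. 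Then drop the integrality claim and assert only that columns $1,\dots,N$ are integral.
\end{itemize}

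Separately, fix your indexing. Your ratios $(c_1\cdots c_{j-1})/(c_1\cdots c_{i-1})$ amount to $p_1=1$, so $a^0_1$ is never rescaled. The induction then cannot start when $[a^0_1]$ is not an Euler class, and this also contradicts your later $d\tilde\psi_j=c_1\cdots c_j\,d\psi_j$. Use $p_k=c_1\cdots c_k$; the displayed $P$ has $c_1$ in position $1$. At stage $j$, choose $c_j$ so that $c_j$ times the integral cocycle $(c_1\cdots c_{j-1})a^0_j+\sum_{0<i<j}\tilde\psi_i\,(c_{i+1}\cdots c_{j-1})a^i_j$ on $\tilde E_{j-1}$ is an Euler class. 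This choice only affects $p_k$ for $k\geq j$, so it does not disturb earlier stages.
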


The result is stronger in the case of torus bundles because
\begin{itemize}
\item every integral class is the Euler class of a circle bundle and
\item in this case the notion of transgressive level is independent of the transgressive set chosen. 
\end{itemize}

\begin{corollary}
There is a correspondence between level-one iterated circle bundles with a transgressive generating  set together with a level-one cocycle
 and vanishing Massey product data  $A = (a^i_j)$ 
with
\begin{itemize}
\item  $a_1^0 \in C^2(M;\Z)$,
\item  $a_{i}^{i-1} \in C^1(M;\Z)$ , for $i=1,\dots, N$  and
\item  $a_{N+1}^N$ a cocycle.
\end{itemize}
\end{corollary}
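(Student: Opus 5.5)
The corollary follows from Theorem \ref{theo:iterated <=> left Massey}, specialised to circle fibres; the work is in checking that the two extra features of the circle case remove the scaling matrix $P$ and make the correspondence honest. I would construct the two directions separately and then check that they are mutually inverse.

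\emph{From bundles to data.} Start with a level-one iterated circle bundle $(E_j)_{j\leq N}$, a transgressive generating set $\Psi$ with all $\psi_j$ of degree $1$, and a level-one cocycle $H$. The first half of Theorem \ref{theo:iterated <=> left Massey} (that is, Proposition \ref{prop:iterated => Massey}) gives a matrix $A$ with $dA-\bar A A=0$. Degree counting in $d\psi_j = a^0_j+\sum_{i<j}\psi_i a^i_j$, where the left side has degree $2$, then gives $a^0_j\in C^2(M;\Z)$ and $a^i_j\in C^1(M;\Z)$ for $1\le i<j\le N$. In particular $a^0_1\in C^2$ and $a^{i-1}_i\in C^1$, as the corollary requires. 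The entry $a^N_{N+1}$ is a cocycle by the $(N,N+1)$ entry of the vanishing equation, since that entry has no terms on its right side.

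\emph{From data to bundles.} Given data $A$ of the stated degrees, Proposition \ref{prop:Massey => iterated 1} produces successive Hirsch extensions with $d\psi_j$ a degree-$2$ integral cocycle in $C_{j-1}$, together with a cocycle $H$. I would then build the bundles inductively. Suppose $E_{j-1}$ has been constructed and $C_{j-1}$ has been identified with a quasi-isomorphic subcomplex of $C^\bullet(E_{j-1};\Z)$. Then $[d\psi_j]$ is an integral class in $H^2(E_{j-1};\Z)$, and since $H^2(\,\cdot\,;\Z)$ classifies circle bundles, it is the Euler class of a circle bundle $E_j\to E_{j-1}$. This is exactly the hypothesis under which Theorem \ref{theo:iterated <=> left Massey} allows $c_j=1$, so $P=\mathrm{Id}$ and $A$ itself is realised.

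The main obstacle is the bookkeeping needed to make this step legitimate, and to show the correspondence does not depend on choices. First, one has to choose the global angular form $\psi_j$ on $E_j$ so that $d\psi_j$ equals the prescribed cochain exactly, not merely up to cohomology. I would handle this by taking any angular form and correcting it by a pulled-back cochain from $E_{j-1}$, which is possible because the two Euler cochains are cohomologous. Second, the resulting $E_N$ must be level-one, which holds by construction. Third, one needs the notion of transgressive level to be independent of the generating set, the second bullet preceding the corollary, which I would prove as follows. Any two transgressive generating sets differ by $\psi_j\mapsto\psi_j+\beta_j$ with $\beta_j$ a degree-$1$ cochain in $C_{\Psi_{<j}}$. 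In degree $1$ such a $\beta_j$ has level at most one, and a level-one element has the form $c^0+\sum_i\psi_i c^i$ with $c^i$ of degree $0$, so the substitution preserves level one.

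With these in place, the two constructions are inverse up to the choices of angular forms and isomorphism of bundles, which gives the stated correspondence.
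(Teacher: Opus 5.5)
Your proposal is correct and follows essentially the paper's argument. The corollary is the theorem relating level-one iterated sphere bundles to vanishing Massey data, specialised to circle fibres, with every $c_j=1$ because each class in $H^2(\,\cdot\,;\Z)$ is the Euler class of a circle bundle. Of your extra bookkeeping, only the cochain-level facts are used: that $a^0_j+\sum_{i<j}\psi_i a^i_j$ is an honest integral cocycle on $E_{j-1}$, and that an angular cochain can be corrected by a pulled-back cochain to hit it exactly. The inductive hypothesis that $C_{j-1}$ is a quasi-isomorphic subcomplex of $C^\bullet(E_{j-1};\Z)$ is not available over $\Z$ and can be dropped. The level-independence argument can also be dropped, since the generating set is part of the data in the statement.
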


We finish this section with an aside of no consequence for the rest of the paper. 
\begin{remark} What happens with the nonzero Massey products?  These also arise in the context of iterated sphere bundles, but they answer a different question. Namely, a potentially nonzero product arises whenever we find a transgressive cochain for the fibration $E_N \to M$, that is  $H \in C_\Psi(M;\Z)$ and  $dH \in C(M;\Z)$
\end{remark}

\section{Order reversal duality}\label{sec:order reversal}

Assume we have Massey product data, $\{a_j^i\}_{0 \leq i < j \leq N+1}$  that shows that a Massey product vanishes
\[\IP{[a_1^0],\dots,[a_{N+1}^N]} = 0,\]
where $[a_1^0]$ and $[a_{N+1}^N]$  are even classes and the remaining classes are odd. Then Theorem \ref{theo:iterated <=> left Massey} allows us to re-interpret this data in terms of an iterated sphere bundle endowed with an odd-cocycle: the class $[a^1_0]$ gives information about the first sphere bundle, the subsequent classes $[a_{i+1}^i]$ give part of the information about iteration process and the class $[a_{N+1}^N]$ contains some information about the odd cocycle.

The idea of what comes next is simple: if we were to read the same data backwards, the class $[a_{N+1}^N]$ would correspond to information about the first sphere bundle, the classes $[a_{i+1}^i]$ would give part of the information about iteration process (now done backwards) and the class $[a_1^0]$ would have part of the information about an odd cocycle at the last step. This way of repackaging the same data would give rise to a (potentially) topologically distinct space. The point is that the same Massey product data, can be interpreted in two different ways as iterated sphere bundles.

Because we are being careful about the use of integral cochains, there is a small  obstacle in our way:  the space of singular cochains is not graded commutative so the data for $\IP{[a_1^0],\dots,[a_{N+1}^N]} $ does not translate into data for $\IP{[a_{N+1}^N],\dots,[a_{1}^0]}$. Instead, of actually reversing the order of the terms in the Massey product, we will see that the same product comes from the {\it right transgressive complex} of a different iterated sphere bundle. The results in this case are completely analogous to the ones obtained in the previous section, so next we just state the version  of Theorem \ref{theo:iterated <=> left Massey} with the relevant changes.

As before, assume that we have an iterated sphere bundle $(\hat{E}_k)_{k=1}^N$.  Assume that we have a  transgressive generating set of global angular cocycles $\hat\Psi = \{\hat{\psi}^{1},\dots,\hat{\psi}^{N}\}$ and use the right transgressive complexes of transgressive level 1 :
\[\hat{C}_{\hat{\Psi}^{\leq  j}} :=   C^\bullet(M;\Z) \tensor\wedge^{\leq 1} \IP{\hat{\psi}^{1},\dots, \hat{\psi}^{j}} ,\]
\[d\hat\psi^{j}\in \hat{C}_{\hat{\Psi}^{\leq  j-1}}.\] 

\begin{theorem}\label{theo:iterated <=> right Massey}
Let $(\hat{E}_j)_{j\leq N}$  be an iterated sphere bundle  which is level-one with respect to a right transgressive generating set $\hat{\Psi} = \{\hat{\psi}^{1}, \dots, \hat{\psi}^N\}$, 
and let $\hat{H} \in \hat{C}_{\hat{\Psi}^{\leq N}}^l$ be a level-one $l$-cocycle so we have
\begin{equation}\label{eq:dpsi hat}
\begin{aligned}
d\hat{\psi}^{j} &= \hat\varepsilon_{2n_j}= a^{N+1-j}_{N+1} + \sum_{i=1}^{j-1}  a_{N+1-i}^{N+1-j} \hat{\psi}^{i},\\
\hat{H}&= a_{N+1}^0 + \sum_{i=1}^{N}  a_{N+1-i}^0\hat{\psi}^{i}.
\end{aligned}
\end{equation}
Then, the collection $\{a^i_j\}_{0\leq i<j \leq N+1}$ is vanishing  Massey product data for the product
\[\IP{[a_1^0],\dots,[a_{N+1}^N]}.\]

Conversely, 
given vanishing Massey product data with
\begin{itemize}
\item  $a_1^0$ a cocycle on $M$,
\item  for $i=1,\dots, N$,  $a_{i}^{i-1}$ odd cocycles and
\item  $a_{N+1}^N$ an even cocycle.
\end{itemize}
there is a collection of  nonzero natural numbers $c_j$, $j=1,\dots N$,  for which
\[((\pi_{k=i}^{j-1}c_k) a_j^i)), \qquad \mbox{ for } 1\leq i \leq j \leq N+1\]
 is the data of a vanishing Massey product  arising from a level-one iterated sphere bundle.
 
The constants $c_j$ can be chosen to be 1 if the sequence of cocycles produced from the Massey product data using \eqref{eq:dpsi hat} are themselves representatives of Euler classes of sphere bundles.
\end{theorem}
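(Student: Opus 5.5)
The plan mirrors the proof of Proposition \ref{prop:iterated => Massey}, with row vectors replaced by column vectors so that the transgressive generators now sit on the right. Set $\hat\psi^0=1$ and reindex by letting $\phi_k=\hat\psi^{N+1-k}$ for $k=1,\dots,N$ and $\phi_{N+1}=1$. Then \eqref{eq:dpsi hat} becomes $d\phi_k=\sum_{m=k+1}^{N+1}a^k_m\phi_m$ for $1\le k\le N$ and $\hat H=\sum_{m=1}^{N+1}a^0_m\phi_m$. Writing $\Phi=(0,\phi_1,\dots,\phi_N,1)^t$, these equations are packaged as the single matrix identity
\[
d\Phi + \begin{pmatrix}\hat H\\ \mathbf{0}\end{pmatrix} = A\,\Phi,
\]
where $A$ is the same strictly upper triangular matrix as in Theorem \ref{theo:iterated <=> left Massey}. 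The first row of $A$ produces $\hat H$, and the last row is zero, consistent with $d1=0$.

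Next apply $d$ to both sides. The left side gives $d\hat H=0$ in the first entry, and $d$ of the last entry vanishes. On the right, Leibniz in the form \eqref{eq:properties} gives $d(A\Phi)=(dA)\Phi-\bar A\,d\Phi$. Substituting $d\Phi=A\Phi-(\hat H,0,\dots,0)^t$, the $\hat H$ term is killed because the first column of $A$ is zero. This leaves $0=(dA)\Phi-\bar A A\Phi=(dA-\bar AA)\Phi$. Here the operator $\bar{\cdot}$ acts on the cochain entries $a^i_j$, which sit to the left of the $\phi$'s, so no sign commutation with the $\hat\psi$'s is required. This is precisely why the right transgressive complex is the natural setting, and it is what allows the argument to run over $\Z$.

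Finally, the identity $(dA-\bar AA)\Phi=0$ must be read entry by entry. Since the $\phi_m$ ($m\le N$) together with $1$ are free generators of the level-one right complex $C^\bullet(M;\Z)\tensor\wedge^{\leq1}\IP{\hat\psi^1,\dots,\hat\psi^N}$, the coefficient of each generator must vanish. Hence $dA-\bar AA=0$, which by Remark \ref{rem:matrix massey} is vanishing Massey product data. I expect this step to be the main point to check. One must verify that the level-one decomposition is direct, so that coefficients can be compared; this is where the right transgressive setup is used. One must also verify that no stray sign appears from $\bar A$ acting versus the parity of the $\phi$'s. In the left version this required a parity argument on the entries of $\Psi^tA$, whereas here it should be automatic.

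For the converse, the plan reverses the steps as in Proposition \ref{prop:Massey => iterated 1}. Define successive right Hirsch extensions by \eqref{eq:dpsi hat} and check $d^2\hat\psi^j=0$ and $d\hat H=0$ by the same computation run backwards. The parity hypotheses are what make this work: $a^N_{N+1}$ is even so that it can be an Euler class, and the middle entries are odd so that the degrees of the $\hat\psi^j$ are odd. Then conjugate by a diagonal matrix $P$, now scaling from the opposite end, with $P^{-1}(dA-\bar AA)P=0$. Using \cite{BV03}, choose $c_N,c_{N-1},\dots$ successively so that $d\tilde{\hat\psi}^j$ becomes a multiple representing a genuine Euler class over $\hat E_{j-1}$. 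No rescaling is needed when these classes are already Euler classes, in which case all $c_j=1$.
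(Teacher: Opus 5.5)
Your proposal is correct and is essentially the paper's own argument: your $\Phi=(0,\hat\psi^N,\dots,\hat\psi^1,1)^t$ is exactly the paper's column vector. Like the paper, you write $d\Phi+(\hat H,0,\dots,0)^t=A\Phi$, apply $d$ with the Leibniz rule, substitute back, and use the zero first column of $A$ to kill the $\hat H$ term, reaching $(dA-\bar{A}A)\Phi=0$; the paper then treats the converse only as a reversal of these steps plus the diagonal rescaling of Theorem \ref{theo:iterated <=> left Massey}, as you do.

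Your explicit coefficient comparison and your observation that no parity argument is needed here are good additions. One thing to make explicit when you write out the converse: the rescaling must also multiply $\hat H$, i.e.\ apply the factor $\prod_{k=i}^{j-1}c_k$ to the row $i=0$ as well, with $c_0=1$. Otherwise, in the rescaled generators $\tilde\phi_m=(\prod_{k=m}^{N}c_k)\phi_m$, the coefficients of $\hat H$ become $a^0_m/\prod_{k=m}^{N}c_k$, which need not be integral.
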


\begin{proof}[Remark about the proof]
The proof is nearly identical to the proof of Theorem \ref{theo:iterated <=> left Massey}, so we only highlight the key step, namely how one obtains vanishing Massey product data from an iterated sphere bundle. Similarly to the previous case,  we  still set $\hat\psi^0 = 1$ and let 
 \[
 \begin{pmatrix} 
 0\\ \hat{\Psi}\end{pmatrix} = \begin{pmatrix} 0\\ \hat{\psi}^N\\ \vdots \\ \hat{\psi}^0\end{pmatrix} \quad \mbox{ and } \begin{pmatrix}  \hat{H}\\ \mathbf{0}\end{pmatrix} = \begin{pmatrix} \hat{H}\\ 0\\ \vdots \\ 0
 \end{pmatrix}.
 \]
 With these conventions,  equation \eqref{eq:dpsi hat} becomes
 \begin{equation}\label{eq:the matrix A II}
  d\begin{pmatrix}
0\\
\hat\Psi
 \end{pmatrix}
+\begin{pmatrix}
\hat{H}\\
\mathbf{0} 
\end{pmatrix} =
\hat{A}
\begin{pmatrix}0\\  \hat{\Psi}
\end{pmatrix}.
 \end{equation}
 where 
 \[\hat{A} = 
 \begin{pmatrix}
 0  & a^0_1 & a^0_2 & \cdots & a^0_N & a^0_{N+1}\\
0  & 0 & a^1_2 & \cdots & a^1_N & a^1_{N+1}\\
0  & 0 & 0 & \cdots & a^2_N & a^2_{N+1}\\
\vdots  & \vdots & \vdots & \ddots &\vdots\\
0  & 0 & 0 &\cdots &0  & a^{N}_{N+1}\\
0  & 0 & 0 &\cdots &0  & 0
 \end{pmatrix}.
 \]
 
  Then, using that the Euler forms and  $\hat{H}$ are cocycles, we have
 \begin{align*}
 0 &= d\left(   d\begin{pmatrix}
0\\
\hat\Psi
 \end{pmatrix}
+\begin{pmatrix}
\hat{H}\\
\mathbf{0} 
\end{pmatrix} \right)\\
 & = d\left(\hat{A}
\begin{pmatrix}0\\  \hat{\Psi}
\end{pmatrix}\right)\\
& = (d\hat{A})\begin{pmatrix}0\\  \hat{\Psi}
\end{pmatrix} - \bar{\hat{A}}\,  d\begin{pmatrix}0\\  \hat{\Psi}
\end{pmatrix}\\
& = (d\hat{A})\begin{pmatrix}0\\  \hat{\Psi}
\end{pmatrix} - \bar{\hat{A}} \left(\hat{A} \begin{pmatrix}0\\  \hat{\Psi}  \end{pmatrix} -  \begin{pmatrix}\hat{H}\\  \mathbf{0}  \end{pmatrix}\right)\\
&= (d\hat{A} - \bar{\hat{A}} \hat{A})\begin{pmatrix}0\\  \hat{\Psi}  \end{pmatrix} .
 \end{align*}

\end{proof}

\begin{definition}[Reverse dual]
A pair of level-one iterated sphere bundles  over a common base $M$ with transgressive generating sets $\Psi$ and $\hat{\Psi}$, endowed with odd cocycles, $(E,\Psi,H),(\hat{E},\hat{\Psi},\hat{H})\to M$ are \emph{reserve duals} if there is an invertible  diagonal matrix
\[P = \begin{pmatrix}
1 & & & &\\
& \lambda_1 &  &&\\
& & \ddots  &&\\
&  &  &\lambda_N&\\
&  &  & & 1\\
\end{pmatrix}, \quad \lambda_i \in \R
\] such that
\[\hat{A} = P^{-1} A P, \]
where $A$ and $\hat{A}$ are the Massey product matrices of $(E,\Psi,H),(\hat{E},\hat{\Psi},\hat{H})\to M$ with respect to their left and right transgressive complexes, respectively.   They are \emph{unimodular} reverse duals if  we can take $\lambda_i = \pm  1$ for all $i$.
\end{definition}

 
\begin{example}[T-dual circle bundles]
Working with de Rham cohomology, let $(E,H) \to M$ be a principal circle bundle endowed with a closed 3-form which represents an integral class $H$. Let $\psi$ be a connection on $E$ so that $d\psi = \varepsilon = a_1^0$ represents the Euler class of $E \to M$ and the transgressive complex of $E$ is $\Omega_\psi = \Omega(M)\tensor \wedge \IP{\psi}$. After changing $H$   by an exact form, we may assume it lies in the transgressive complex $\Omega_\psi$ and we write
\[H = h +  \psi \hat{\varepsilon} = :  a_2^0+ \psi a_2^1.\]
The Massey product data associated to this situation is therefore
\[a_1^0 = \varepsilon,\quad a_2^1 = \hat{\varepsilon},\quad a_2^0  = h \Rightarrow \IP{a_1^0,a_2^1} = d a_2^0 = dh.\]

The reverse dual to $(E,\psi,H)$ is a circle bundle with 3-form $(\hat{E},\hat{\psi},\hat{H})\to M$ with Chern class represented by  $a_2^1 = \hat\varepsilon$ and 3-form
\[\hat{H} = a_2^0 + a_1^0 \hat{\psi} = h + \varepsilon \hat{\psi}.\]
The triple $(\hat{E},\hat{\psi},\hat{H})$ is  the T-dual of $(E,\psi, H)$ constructed in \cite{BEM04}.
\end{example}

To make the sums and relations introduced above easier to digest it is useful to present  the data for an iterated sphere bundle and its reverse dual in a graphic way.  We focus on  unimodular case. If we are to make a table in which the columns correspond to the expansion of each $d\psi_i$ in terms of transgressive generators, then the lines would correspond to the reverse dual data as outlined in Table \ref{fig1}.

\begin{table}[h!!]
\begin{center}
\begin{tabular}{||c||c|c|c|c|c||l||}
\hline
\hline
& {\color{darkred}$d\psi_1\downarrow$} &{\color{darkred}$d\psi_2\downarrow$}& $\cdots$ &{\color{darkred}$ d\psi_{N}\downarrow$}& {\color{darkred}$ H\downarrow$}& \\
\hline
\hline
{\color{darkred}$\psi_0 $}&$a_1^0$	 &	$a_2^0$  & $\cdots$	& $a_N^0$	&  $a_{N+1}^0$ 	&{\color{darkgreen}$\leftarrow \hat{H}$}\\
{\color{darkred}$\psi_1 $}&	 & $a_2^1$	& $\cdots$	&$a_N^1$	& $a_{N+1}^1$	&{\color{darkgreen} $\leftarrow d\hat\psi^{N}$}\\ 
$\vdots$	    &	 		&		&$\ddots$	&	&	&$\vdots$\\ 
{\color{darkred}$\psi_{N-1}$}& &		& &$a^{N-1}_N$	&$a^{N+1}_{N-1}$	&{\color{darkgreen} $\leftarrow d \hat\psi^2$}\\ 
{\color{darkred}$\psi_{N}$}	& 	&		&  	&	&$a_{N+1}^N$&{\color{darkgreen} $\leftarrow d \hat\psi^1$}\\ 
\hline
\hline
& {\color{darkgreen}$\hat\psi^{N}$}& {\color{darkgreen}$\hat\psi^{N-1}$} & $\cdots$& {\color{darkgreen} $\hat\psi^{1}$} & {\color{darkgreen}$\hat\psi^0 $} &\\
\hline
\hline
\end{tabular}
\caption{The columns give the coefficients of  $\psi_0 := 1$, $\psi_1$,$\dots$, $\psi_n$  for the forms  $d\psi_1$, $d\psi_2$, $\dots$, $d\psi_n$  and $H$, while the lines  give the same information for $d\hat\psi^1$, $d\hat\psi^2$,  $\dots$, $d\hat\psi^n$  and $\hat{H}$. All entries below the diagonal have value 0 and were omitted.}\label{fig1}
\end{center}
\end{table}

\section{T-duality}\label{sec:T-duality}

Recall that for principal torus bundles, T-duality is defined as follows:

\begin{definition}
Let $(E,H)$, $(\hat{E},\hat{H})$ be principal  $k$-torus bundles over $M$ endowed with closed 3-forms $H \in \Omega^{3}_{cl}(E)$ and  $\hat{H} \in \Omega^{3}_{cl}(\hat{E})$. The bundles $(E,H)$ and $(\hat{E},\hat{H})$ are \emph{topological  T-duals} if there is an invariant form $F\in \Omega^{2}(E\times_M\hat{E})$ such that
\begin{itemize}
\item (Gerbe trivialization)
\[dF = p^*H - \hat{p}^*\hat{H}.\]
where $p\colon E\times_M\hat{E} \to E$ and $\hat{p}\colon E\times_M\hat{E}$ are the natural projections and
\item (nondegeneracy)  the map
\[\frak{t}\times \hat{\frak{t}} \to \R, \qquad (\gamma,\hat{\gamma}) \mapsto F(X_{\gamma},X_{\hat{\gamma}})\]
is nondegenerate, where $X_\gamma$ is the infinitesimal generator corresponding to the Lie algebra element $\gamma$ and $\frak{t}$ (resp. $\hat{\frak{t}}$) is the Lie algebra of the torus acting on $E$ (resp. $\hat{E}$).

The form $F$ is the \emph{T-duality kernel}. 
\end{itemize}
\end{definition}

We gather the information on T-dual spaces in the following commutative diagram
\begin{equation}\label{eq:basic setup}
\xymatrix@C=-18pt{
& (E\times_M \hat{E}, dF = p^*H - \hat{p}^*\hat{H})\ar[dl]^{p}\ar[dr]_{\hat{p}}&\\
(E,H)\ar[dr]^{\pi}& & (\hat{E},\hat{H})\ar[dl]_{\hat{\pi}}\\
& M&
}
\end{equation}

Changes in $H$ or $\hat{H}$ by exact forms can be incorporated into $F$ so being a T-dual pair only depends on the cohomology class of the 3-forms involved. Similarly typical consequences of spaces being T-dual such as isomorphisms of twisted cohomology, twisted K-theory and Courant algebroids are also preserved  by the action of 2-forms.

In the case above, since the T-duality kernel, $F$, has degree $2$, it also can at most have transgressive level 2 and the nondegeneracy condition in fact forces $F$ to be of transgressive level 2. If one allows the twisting forms, $H$ and $\hat{H}$, to be of higher degree, it is not a given that $F$ will be of transgressive level 2, even if we impose that the twists are of transgressive level 1. The general theory of T-duality can still be developed allowing for a T-duality kernel of higher transgressive level. But if we further impose that $F$ has transgressive level 2, then the nondegeneracy condition acquires the familiar form stated above (see \cite{Cav25},  Definition 3.2 , Corollary 3.5  and Example 4.9).

\begin{definition}
Let $(E,\Psi,H)\to M$ and $(\hat{E},\hat{\Psi},\hat{H})\to M$ be two level-one iterated sphere bundles with transgressive generating sets $\Psi$ and $\hat\Psi$ and level-one closed odd forms $H \in \Omega_\Psi(E)$ and $\hat{H}\in \Omega_{\hat{\Psi}}(\hat{E})$. The bundles  $(E,H)\to M$ and $(\hat{E},\hat{H})\to M$ form a {\it transgressive level-two  T-dual pair} if
\begin{itemize}
\item (Gerbe trivialization) there is a transgressive level-two form $F \in \Omega_{\Psi\cup\hat{\Psi}}(E\times_M \hat{E})$  such that
\[dF = p^*H - \hat{p}^*\hat{H}.\]
where $p\colon E\times_M\hat{E} \to E$ and $\hat{p}\colon E\times_M\hat{E}$ are the natural projections and
\item (nondegeneracy)  the map
\begin{equation}\label{eq:nondegeneracy F}
\IP{\Psi}^*\times \IP{\hat{\Psi}}^* \to \R, \qquad (v,\hat{v}) \mapsto F(v,\hat{v})
\end{equation}
is nondegenerate.
\end{itemize}
The spaces $(E,\Psi,H)\to M$ and $(\hat{E},\hat{\Psi},\hat{H})$ are \emph{unimodular} T-duals if the bilinear form \eqref{eq:nondegeneracy F} is unimodular.  
\end{definition}

It follows from \cite{Cav25}  that T-dual spaces have isomorphic twisted cohomology and Clifford--Courant algebroids, so here we focus on the question of existence, which is  our main result:

\begin{theorem}
Let $(E,\Psi,H)\to M$ be a first-level $N$-iterated sphere bundle with a left transgressive generating set, $\Psi$, and level-one closed odd form of a fixed degree in the transgressive complex of $E$ representing an integral class, $H$.  If  $(\hat{E},\hat{\Psi},\hat{H})$  is any (unimodular) reverse dual  bundle of $(E,\Psi,H)$ then $(\hat{E},\hat{\Psi},\hat{H})$ is a (unimodular) T-dual  of $(E,\Psi,H)$. 
\end{theorem}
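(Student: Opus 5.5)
The plan is to write down the T-duality kernel explicitly as a bilinear pairing between the two transgressive generating sets, with the pairing matrix given by the matrix $P$ in the definition of reverse duals, and then check the two conditions of a transgressive level-two T-dual pair directly. Both transgressive complexes pull back to $E\times_M\hat E$, and all $\psi_i,\hat\psi^j$ live in $\Omega_{\Psi\cup\hat\Psi}(E\times_M\hat E)$. As in the proofs of Theorem \ref{theo:iterated <=> left Massey} and Theorem \ref{theo:iterated <=> right Massey}, set $\psi_0=\hat\psi^0=1$. Define the row vector $L=(\psi_0,\dots,\psi_N,0)$ and the column vector $R=(0,\hat\psi^N,\dots,\hat\psi^0)^t$. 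By \eqref{eq:the matrix A} and \eqref{eq:the matrix A II} these satisfy
\[dL+(\mathbf 0,H)=LA,\qquad dR+(\hat H,\mathbf 0)^t=\hat A R,\qquad \hat A=P^{-1}AP.\]
The candidate kernel is
\[F:=\pm L P R=\pm\sum_{k=1}^N \lambda_k\,\psi_k\,\hat\psi^{N+1-k}.\]
This is a sum of products of exactly two transgressive generators (the endpoint terms drop out because $L_{N+1}=0=R_0$), so it has transgressive level two.

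\textbf{Gerbe trivialization.} I will compute $dF$ with the Leibniz rule $d(ab)=(da)b-\bar a\,db$ from \eqref{eq:properties}:
\[d(LPR)=(dL)PR-\bar L P\,dR=(LA-(\mathbf 0,H))PR-\bar L P(\hat AR-(\hat H,\mathbf 0)^t).\]
The boundary terms are
\[(\mathbf 0,H)PR=H\cdot 1\cdot\hat\psi^0=H,\qquad \bar LP(\hat H,\mathbf 0)^t=\bar\psi_0\hat H=-\hat H.\]
Their contribution is therefore $\pm(H-\hat H)$, which is what is needed, with the overall sign fixed by the choice of $\pm$ in $F$. The remaining quadratic terms are $LAPR$ and $\bar LP\hat AR=\bar L AP R$. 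The entries $\psi_1,\dots,\psi_N$ are odd, so $\bar\psi_k=\psi_k$ for $k\ge1$, while $\bar\psi_0=-1$. Hence $(L-\bar L)AP R=2\,(\psi_0\text{-row of }A)PR$, which is not obviously zero.

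\textbf{Main obstacle.} Making these quadratic terms cancel is the main obstacle. My first attempt would be to use the same trick as in the proof of Proposition \ref{prop:iterated => Massey}: the vector $LA$ has even entries apart from the last one, and the last one is killed by $R_{N+1}$... wait, more precisely, the last column pairs with $\hat\psi^0=1$, so that term must be treated separately. Rewriting $LA=-\overline{\bar L A}$ and commuting the graded pieces should show that the two quadratic terms agree up to a term involving only the $\psi_0$-row, namely $\sum_j a^0_j\hat\psi^{N+1-j}$, and the $\hat\psi^0$-column, namely $\sum_i\psi_i a^i_{N+1}$. If a discrepancy survives, I would correct $F$ by a level-one term and absorb the remainder into $H$ and $\hat H$ by exact forms, which is allowed since only their classes matter.

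\textbf{Nondegeneracy.} This is immediate from the explicit form of $F$. Under \eqref{eq:nondegeneracy F}, $F$ pairs $\psi_k^*$ with $(\hat\psi^{N+1-k})^*$ with coefficient $\pm\lambda_k\neq0$. So the matrix of the pairing is an antidiagonal matrix with nonzero entries, and it is nondegenerate. In the unimodular case $\lambda_k=\pm1$, so the pairing is unimodular.
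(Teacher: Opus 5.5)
Your candidate kernel $\pm LPR$ and your nondegeneracy argument agree with the paper. However, the gerbe-trivialization step, which is the whole content of the theorem, is never completed, and the bookkeeping you do give is off by a sign. From $d(LPR)=(dL)PR-\bar LP\,dR$ the boundary terms enter as $-(\mathbf 0,H)PR+\bar LP(\hat H,\mathbf 0)^t=-H-\hat H$, not $\pm(H-\hat H)$. Because of this sign error you treat the quadratic remainder $(L-\bar L)APR$ as something that has to cancel. Your fallback is to absorb any survivor into $H$ and $\hat H$ by exact forms, and that cannot work. The survivor is closed but in general not exact. Modifying $H$, $\hat H$ by exact forms, or $F$ by level-one terms, never removes it.

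The missing observation is that this remainder is exactly what is needed. We have $L-\bar L=(2,0,\dots,0)$, $P$ is diagonal with $P_{00}=P_{N+1,N+1}=1$, and $\hat a^0_j=\lambda_j a^0_j$ because $\hat A=P^{-1}AP$. Therefore
\[
(L-\bar L)APR=2\sum_{j=1}^{N+1}\lambda_j a^0_j\,\hat\psi^{N+1-j}=2(\hat AR)_0=2\hat H,
\]
where the last equality is the top row of \eqref{eq:the matrix A II}, using $(dR)_0=0$. Hence $d(LPR)=2\hat H-H-\hat H=\hat H-H$. So $F=-\sum_k\lambda_k\psi_k\hat\psi^{N+1-k}$ satisfies $dF=H-\hat H$, which is the conclusion of \eqref{eq:H-hatH}.

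The paper runs the same expansion in the other direction. It writes $H-\hat H$ in terms of $(\mathbf 0,H)$ and $(\hat H,\mathbf 0)^t$ sandwiched with $P$. It then substitutes both structure equations, so the two quadratic terms cancel identically because $P\hat A=AP$. What remains, $-(dL)PR+\bar LP\,dR$, is recognized as $-d(LPR)$. As printed, with $\bar\psi_0=-1$ in the zeroth slot, the first and second equalities of \eqref{eq:H-hatH} are each off by exactly $2\hat H$, and the two discrepancies cancel. That $2\hat H$ is the same term you were unable to account for.
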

\begin{proof}
Using the notation of Theorems \ref{theo:iterated <=> left Massey} and \ref{theo:iterated <=> right Massey}, we let  $A$  and $\hat{A}$ be the matrices of Massey product data associated to  $(E,\Psi,H)$ and $(\hat{E},\hat{\Psi},\hat{H})$. By hypothesis, there is a diagonal matrix
\[P = \begin{pmatrix}
1 & & & &\\
& \lambda_1 &  &&\\
& & \ddots  &&\\
&  &  &\lambda_N&\\
&  &  & & 1\\
\end{pmatrix}
\]
such that $\hat{A} =  P^{-1}A P$. Then
\begin{equation}\label{eq:H-hatH}
\begin{aligned}
H -\hat{H} & = \begin{pmatrix}\mathbf{0}\\ H\end{pmatrix}^t  P \begin{pmatrix}0 \\ \hat\Psi\end{pmatrix} - \begin{pmatrix}\bar{\Psi} \\ 0\end{pmatrix}^t  P \begin{pmatrix}\hat{H}\\ \mathbf{0}\end{pmatrix} \\
&=  
\left(\begin{pmatrix}\bar\Psi\\ 0\end{pmatrix}^tA - d\begin{pmatrix} \Psi\\ 0\end{pmatrix}^t \right) P \begin{pmatrix}0 \\ \hat\Psi\end{pmatrix} - \begin{pmatrix}\bar\Psi \\ 0\end{pmatrix}^t  P \left(P^{-1}A P\begin{pmatrix}0\\ \hat{\Psi}\end{pmatrix} -d\begin{pmatrix}0 \\ \hat\Psi\end{pmatrix}\right) \\
&= - d\begin{pmatrix} \Psi\\ 0\end{pmatrix}^t  P \begin{pmatrix}0 \\ \hat\Psi\end{pmatrix}  +\begin{pmatrix}\bar\Psi \\ 0\end{pmatrix}^t P d\begin{pmatrix}0 \\ \hat\Psi\end{pmatrix}\\ 
& = - d\left(\begin{pmatrix} \Psi\\ 0\end{pmatrix}^t  P \begin{pmatrix}0 \\ \hat\Psi\end{pmatrix} \right) \\
& = -\sum_{i=1}^N \lambda_i\psi^i \hat{\psi}_{N+1-i},
\end{aligned}
\end{equation}
where in the first equality we used that the first and last entries of $P$ are 1 and that $\psi_0 = \hat{\psi}^0 =1$ and in  the second equality we used \eqref{eq:the matrix A} and \eqref{eq:the matrix A II}.
\end{proof}


\section{Examples}\label{sec:examples}
\begin{example}\label{ex:Heisenberg}
The 3-dimensional Heisenberg manifold, $M$, is the quotient of the Heisenberg group
\[\mathbb{H} = \left\{\begin{pmatrix} 1 & x & z\\
0 & 1 & y\\
0 & 0 & 1
\end{pmatrix}\colon x,y,z \in \R\right\}
\]
by left action of the lattice  $\Gamma$ of matrices  with integral coefficients, or more explicitly,
\[(x,y,z) \sim (x+1,y, z+y)\sim (x,y+1,z)\sim(x,y,z+1).\]

The (co)tangent bundle of $M$ is trivializable with global frames given by
\[TM = \IP{\{\del_x,\del_y + x \del_z, \del_z\}},\quad T^*M = \IP{\{dx,dy,dz - x dy\}}\]

The Heisenberg manifold is an iterated 2-torus bundle over the circle with projection
\[\begin{pmatrix} 1 & x & z\\
0 & 1 & y\\
0 & 0 & 1
\end{pmatrix}  \in \mathbb{H}/\Gamma \mapsto  y \in \R/\Z.\]
The global angular forms are
\[\psi_1 = dx,\qquad \psi_2 = dz -xdy\]
Any nonzero 3-form on $M$ will have transgressive level 2, so  to comply with the conditions of the construction, we must take $H =0$. With these choices we have
\[d\psi_1 = 0 =:a_1^0,\quad d\psi_2 = 0 - \psi_1 dy =: a_2^0 + \psi_1 a_2^1, \quad H = 0 = : a_3^0 +  \psi_1 a_3^1 + \psi_2 a_3^2,\]
that is, the only nonzero entry in the matrix $A$ is $a^1_2 =- dy$.
For these choices the reverse dual has the same Massey product data as indicated in Table \ref{tab:table2}.
\begin{table}[h!!]
\begin{center}
\begin{tabular}{||c||c|c|c||l||}
\hline
\hline
&{\color{darkred}$d\psi_1 \downarrow$}  &{\color{darkred}$d\psi_2 \downarrow$}& {\color{darkred}$H \downarrow$}& \\
\hline
\hline
{\color{darkred}$\psi_0$}&0  & 0 & $0$ &{\color{darkgreen}$\leftarrow \hat H$}\\
{\color{darkred}$\psi_1$}&0  & $-dy$ & 0 &{\color{darkgreen} $\leftarrow d\hat\psi^2$}\\ 
{\color{darkred}$ \psi_2$}& 0  & 0 & 0& {\color{darkgreen} $\leftarrow d \hat \psi^1$}\\
\hline
\hline
&  {\color{darkgreen}$\hat\psi^2 $} & {\color{darkgreen} $\hat\psi^1 $}& {\color{darkgreen} $\hat\psi^0$}&\\
\hline
\hline
\end{tabular}
\caption[Table]{The rows give the coefficients of  $1$, $\psi^1$, $\psi^2$ for the forms  $d\psi^1$, $d\psi^2$ and $H$, while the lines give the same information for $d\hat\psi_1$, $d\hat\psi_2$ and $\hat{H}$.}\label{tab:table2}
\end{center}
\end{table}

Notice that there is a slight difference between the transgressive generators of $\mathbb{H}\to S^1$ and its T-dual. Indeed, for $\mathbb{H}$ we had $d\psi_2 = - \psi_1 dy$, while for the dual we must have $d\hat\psi^2 = - dy\hat\psi^1$.  One readily sees that the dual is also the Heisenberg manifold but with ``reverse monodromy'', that is, for the dual we use the following projection to the circle:
\[\begin{pmatrix} 1 & x & z\\
0 & 1 & y\\
0 & 0 & 1
\end{pmatrix}  \in \mathbb{H}/\Gamma \mapsto  -y \in \R/\Z.\]

This choice makes $\mathbb{H}/\Gamma$ self-T-dual. Notice that already in this example, a vanishing 3-form $H$ does not induce a topologically trivial T-dual bundle.
\end{example}

\begin{example}
Consider the $n$-dimensional nilmanifold  $M$ given by the data $(0,0,e^{12},e^{13}, \dots, e^{1n-1})$. This means that the cotangent bundle of $M$ is generated by a global frame $\{e^1,e^2,\dots, e^n\}$ with
\[de^1 = de^2 = 0, \quad de^i = e^{1}\wedge e^{i-1} =: e^{1(i-1)}, \mbox{ for } i = 3, \dots, n.\]
where we used the shorthand $e^{ij} = e^i \wedge e^j$.

Endow $M$ with 3-form $e^{12n}$.
The manifold $M$ is an iterated torus bundle over the 2-torus with $\psi^i = e^{i+2}$ for $i=1,2, n-2$. Hence, once again we can read of the Massey product data and include it in a table (Table 3). One readily sees that $M$ is self-T dual in a similar way as in the previous example (with monodromy reversal obtained by composing the projection map $M \to T^2$ with the antipodal map).
\begin{figure}[h!!]
\begin{center}
\begin{tabular}{|c|||c|c|c|c|c||l|}
\hline
\hline
& {\color{darkred}$d\psi_1\downarrow$} &{\color{darkred}$d\psi_2 \downarrow$}& $\cdots$  & {\color{darkred}$ d\psi_{n-2} \downarrow$}& {\color{darkred}$ H\downarrow$}& \\
\hline
\hline
{\color{darkred}$\psi_0$}&$e^{12}$	 &	  	&	&	&	&{\color{darkgreen}$\leftarrow \hat H$}\\
{\color{darkred}$\psi_1$}&		 	& $e^1$	& 	&	&	&{\color{darkgreen} $\leftarrow d\hat\psi^{n-2}$}\\ 
$\vdots$& 		 	& & $\ddots$	&	&	&$\vdots$\\ 
{\color{darkred}$\psi_{n-3}$}&	 		&		& &$e^1$&	&{\color{darkgreen} $\leftarrow d\hat\psi^2$}\\ 
{\color{darkred}$\psi_{n-2}$}&	 		&		& 	&	&	$e^{12}$ &{\color{darkgreen} $\leftarrow d\hat\psi^1$}\\ 
\hline
\hline
& {\color{darkgreen}$\hat\psi^{n-2}$}& {\color{darkgreen}$\hat\psi^{n-3}$} & $\dots$ & {\color{darkgreen} $\hat\psi^{1} $}& {\color{darkgreen}$\hat\psi^0$} &\\
\hline
\hline
\end{tabular}
\caption[Table]{The lines give the coefficients of  $1$, $\psi_1$, ..., $\psi_{n-2}$  for the forms  $d\psi_1$, ..., $d\psi_{n-2}$  and $H$, while the columns give the same information for $d\hat\psi^1$, ..., $d\hat\psi^{n-2}$ and $\hat{H}$. Entries with value 0 were omitted.}
\end{center}
\end{figure}
\end{example}

\begin{example}[Nodal curve] In this example we study T-duality for the nodal curve using two different models for a neighbourhood of the nodal curve.
 
Building on Example \ref{ex:Heisenberg}, consider $M = (0,1)\times \mathbb{H}/\Gamma $ with coordinates $(x,y,z)$ in $\mathbb{H}$ and corresponding equivalence class $[x,y,z]\in  \mathbb{H}/\Gamma$ and  $r\in \R_+$. Just as in the previous example, $M$ is an iterated torus bundle over the punctured disc with  projection
\[M\to (0,1) \times S^1, \quad \mu(r, [x,y,z]) = (r, [x]),\] 
and global angular forms $\psi_1 = dy$  and $\psi_2= dz - xdy$, so $d\psi_1 =0$ and $d\psi_2 = \psi_1 \wedge dx$. Just as in the previous  example, the space $M$ is self T-dual and we have $d\hat\psi^1 =0$ and $d\hat\psi^2 =  dx \wedge \hat\psi^1.$

The manifold $M$ of this example is a torus fibration over the punctured disc, $\mathbb{D}^* = (0,1) \times S^1$ and one readily sees that the monodromy around the circle on the base is a Dehn twist. So $M$ is a differentiable model for the neighbourhood of a nodal/focus-focus fiber with the singular fiber itself removed. 

Next, we endow $M$ with three different geometric structures to illustrate the effects of T-duality on those. First we consider  two complex structures, described by global trivializations of their canonical bundle. We let
\begin{equation}\label{ex:tate complex}
\Omega_0 = (d\log r  +  i dy)\wedge (\log r \psi_1 + i\psi_2).
\end{equation}
The form $\Omega_0$ is  decomposable and satisfies $\Omega_0 \wedge \overline{\Omega}_0  \neq 0$, hence it determines an almost complex structure. Since
\[d\Omega_0 = -(d\log r  +  i dy)\wedge (d\log r \psi_1 + idy\ \psi_1) =-(d\log r  +  i dy)^2 \psi_1 = 0,\]  
 the complex structure is integrable. For this complex structure, the fibers of  $M\to \mathbb{D}^*$ are complex  tori. We can transport this complex structure  via T-duality:
\begin{align*}
\tau (\Omega_0)& := \int_{\psi_1}\int_{\psi_2} e^{-(\psi_1\hat\psi^2 + \psi_2 \hat \psi^1)}\Omega_0\\
&= (d\log r +  i dy)\wedge (\log r \hat\psi^1 + i\hat\psi^2)
\end{align*}
showing that the complex structure is self-dual, once we take into account the monodromy reversal discussed in Example \ref{ex:Heisenberg}.

Another complex structure we can put on $M$ has canonical bundle trivialized by
\begin{equation}\label{ex:totally real}
\Omega_1 = ( dy + i \psi_1)\wedge (d\log r + i\psi_2).
\end{equation}
This form once again is readily seen to determine an integrable complex structure. This time, the fibers of  $M \to \mathbb{D}^*$ are totally real and as expected, its T-dual is a symplectic structure and $M \to \mathbb{D}^*$ is a Lagrangian fibration 
\[\tau (\Omega_1) =  e^{-i(dy \hat\psi^2 + d\log r \hat \psi^1)}.\]
Of course, reversing the T-duality we see a Lagrangian fibration generating a complex structure as its T-dual.

Finally, we consider the symplectic structure $\omega = d\log r  dy + \psi_1 \psi_2$, which makes the fibers of $M \to \mathbb{D}^*$ symplectic. Its T-dual is given by
\[\tau (e^{i\omega}) =  e^{-i(dy \hat\psi^2 + d\log r \hat \psi^1)} = -i e^{i(d\log r dy - \hat \psi^1\hat\psi^2)}.\]
which is again of symplectic type.

For completeness we  spell out the correspondence between $M$ and a common holomorphic model for a neighbourhood of a nodal curve, namely the Tate curve family. The Tate curve family, $\tilde{M}$,  is the quotient of $\mathbb{D}^*\times \C^*$ by the $\Z$-action generated by $ (q,z) \sim (q,qz)$. We can write an explicit diffeomorphism between $\tilde{M}$  and $M$:
\[[r_1e^{2 \pi i\theta_1}, r_2 e^{2 \pi i\theta_2}] \in \tilde{M} \mapsto \left(r_1,\left[\frac{\log r_2}{\log r_1},\theta_1, \theta_2\right]\right) \in  M.\]
The identifications of the form $e^{i\theta} =e^{i(\theta+2 \pi)}$ correspond to the identifications  $(x,y,z) \sim  (x,y+1,z)\sim (x,y,z+1) $. while the identification $(q,z) \sim (q,qz)$ corresponds to $(x,y,z) \sim  (x+1,y,z+y)$.

The complex structure on the Tate curve family determined by the complex coordinates $(q,z)$ corresponds (up to factors of $2\pi$) to the complex structure $\Omega_0$ above.

%
\end{example}

\section{T-duality beyond  iterated sphere bundles}\label{sec:beyond}

In this final section we disentangle T-duality from {\it iterated} sphere bundles  and Massey products, providing a framework that aims to be useful for more general  sphere bundles.  To do so, we introduce a generalisation of global angular form and transgressive generating set and  of level-one cochains for a given ``transgressive generating set''. We then state a T-duality theorem in this new setup.

\begin{definition}
Given a fiber bundle $S^{2n_1+1} \times \dots \times S^{2n_N-1}\cdots E \stackrel{\pi}{\to} M$ a \emph{semi-transgressive} generating set for $E$ is a collection $\Psi$ of cochains $\psi_j \in C^{2n_j-1}(E;\Q)$ such that
\begin{itemize}
\item $\pi_*(\psi_{2n_N-1}\cup \dots \cup \psi_{2n_1-1}) = 1$,
\item  $d\psi_j \in C^{\bullet}(E;\Q)\tensor \wedge \IP{\psi_1,\dots,\psi_N}$.
\end{itemize}
\end{definition}

To keep the notation concise, it is useful to once again denote $\psi_0 = 1$. Similarly to the previous discussion we can introduce the notion of level-one cochains, in which case we can use integral cochains instead of rational ones:

\begin{definition}
Given a fiber bundle $S^{2n_1+1} \times \dots \times S^{2n_N-1}\cdots E \stackrel{\pi}{\to} M$, a  semi-transgressive generating set $\Psi$  for $E$ has \emph{level one} if  $\psi_j  \in C^{2n_j-1}(E;\Z)$ and 
\[d\psi_j = \sum_{j=0}^N\psi_ia_j^i, \quad \mbox{ with } a_j^i \in C^{\bullet}(M;\Z).\]
\end{definition}

Next, we endow $E$ with a level-one cocycle $H = \sum_{j=0}^N \psi_j a_{N+1}^j$. As before,  we can pack the information about the semi transgressive generating set and the cocycle $H$  neatly in matrix notation. If we form the  vector $\Psi^t = (\psi_0 ~\dots~ \psi_N)$, then we have 
\begin{equation}\label{eq:matrix notation left}
(d\Psi^t~H) = (\Psi^t~0) A,
\end{equation}
where now $A$ is a $(N+2)\times (N+2)$ matrix and  it  is not required to be upper triangular because we do not assume that $E$ is an iterated sphere bundle. Still, since $d\psi_0=0$, the first column of $A$ vanishes, further since there is no $\psi_{N+1}$ in the generating set  but we are writing the equation above with square matrices, we set the bottom line of $A$ to zero as well. 

The existence of  a left semi-transgressive generating set for $E$ is a genuine restriction on the topology of the bundle $E$. Still, if it exists, $A$ satisfies a ``flatness'' condition.
\begin{lemma}
Let $(E\to M,H,\Psi)$ be a multiple sphere bundle with a semi-transgressive generating set and a level-one closed odd form $H$. Then the matrix $A$ defined by  \eqref{eq:matrix notation left} satisfies
\[dA - \bar{A}A =0.\]
\end{lemma}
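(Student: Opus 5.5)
The plan is to repeat the computation in the proof of Proposition \ref{prop:iterated => Massey}, working with the full square matrix $A$ of \eqref{eq:matrix notation left} and never using that $A$ is upper triangular. Write $\Psi^t = (\psi_0~\dots~\psi_N)$ with $\psi_0=1$, and let $(\Psi^t~0)$ be the row vector of length $N+2$. Then \eqref{eq:matrix notation left} says
\[(d\Psi^t~0) + (\mathbf{0}~H) = (\Psi^t~0)A.\]
The left-hand side has cocycle entries: each $d\psi_j$ is exact, and $H$ is closed. So we apply $d$ to both sides and get $0 = d\big((\Psi^t~0)A\big)$.

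Next we expand $d\big((\Psi^t~0)A\big)$ entrywise with the Leibniz rule in the form $d(ab) = (da)b - \bar a\, db$ from \eqref{eq:properties}. This gives
\[0 = (d\Psi^t~0)A - (\bar\Psi^t~0)\,dA.\]
In the first term we substitute $(d\Psi^t~0) = (\Psi^t~0)A - (\mathbf{0}~H)$. The contribution $(\mathbf{0}~H)A$ vanishes because the bottom row of $A$ is zero by convention. We are left with
\[0 = (\Psi^t~0)AA - (\bar\Psi^t~0)\,dA.\]

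Then we convert $(\Psi^t~0)AA$ into $(\bar\Psi^t~0)\bar A A$. The entries of $(\Psi^t~0)A$ in the first $N+1$ slots are $d\psi_j$ (with $d\psi_0=0$). Each $\psi_j$ has odd degree $2n_j-1$, so $d\psi_j$ is even and hence $\bar{d\psi_j} = -d\psi_j$. The last slot contains $H$, but it multiplies the zero bottom row of $A$, so it drops out of the product with $A$. Hence $(\Psi^t~0)AA = -\overline{(\Psi^t~0)A}\,A$. Using $\overline{ab} = -\bar a\bar b$ entrywise, this equals $(\bar\Psi^t~0)\bar A A$. Substituting back gives
\[(\bar\Psi^t~0)\,(\bar A A - dA) = 0.\]

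The last step, which I expect to be the real obstacle, is to conclude from this that $dA-\bar A A=0$. Here the entries of the matrix are cochains on $M$ (pulled back), and $\bar\psi_0=-1$ up to sign. We therefore need the elements $\psi_0,\dots,\psi_N$ to be linearly independent over $C^\bullet(M;\Z)$, so that a relation $\sum_i \bar\psi_i\, c^i = 0$ with $c^i\in C^\bullet(M)$ forces every $c^i$ to vanish. In the iterated case this came for free from the Hirsch extension structure $\wedge\IP{\Psi}\tensor C^\bullet(M)$. In the present semi-transgressive setting I would argue it from the normalization $\pi_*(\psi_N\cdots\psi_1)=1$, together with the fact that $d\psi_j$ lies in the free module $C^\bullet(M)\tensor\wedge\IP{\psi_1,\dots,\psi_N}$. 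This lets us work inside that free module, where the level-one summands $\psi_i\, C^\bullet(M)$ are independent. Failing that, one can read the identity as holding in the model $C^\bullet(M)\tensor\wedge\IP{\Psi}$ in which $A$ was defined, and compare coefficients there. Comparing the coefficient of each $\bar\psi_i$ then yields each row of $dA-\bar AA$ equal to zero. The bottom row is zero trivially, and the first column is zero since it is zero in $A$.
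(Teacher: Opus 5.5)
Your proposal is correct and is the paper's argument: the paper proves this lemma simply by saying the proof is identical to that of Proposition~\ref{prop:iterated => Massey}, i.e.\ the same Leibniz-rule computation ending in $(\bar\Psi^t~0)(\bar A A-dA)=0$, after which coefficients are read off. The independence of $\psi_0,\dots,\psi_N$ over $C^\bullet(M)$ that you flag is taken for granted in the paper. Of your two suggested fixes, the normalization route does close it: for forms, wedge a relation $c^0+\sum_i\psi_i c^i=0$ with $\psi_N\cdots\widehat{\psi_k}\cdots\psi_1$ and integrate over the fiber using $\pi_*(\psi_N\cdots\psi_1)=1$ to get $c^k=0$. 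Your fallback of comparing coefficients in the formal model, by contrast, is circular unless that model is known to embed in $C^\bullet(E)$, because $d^2\psi_j=0$ there is equivalent to the vanishing of the $j$-th column of $dA-\bar AA$.
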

The proof is identical to the proof of Proposition \ref{prop:iterated => Massey}.

The same argument holds for a set of  level-one, right, semi-transgressive generators. We keep the indexing from  convention from \eqref{eq:dpsi hat}, that is, we let
\begin{align*}
d\hat{\psi}^{j} &= \hat\varepsilon_{2n_j}=\sum_{i=0}^{N}  a_{N+1-i}^{N+1-j} \hat{\psi}^{i},\\
\hat{H}&= \sum_{i=0}^{N}  a_{N+1-i}^0\hat{\psi}^{i}.
\end{align*}

With this setup complete, we can state the relevant result on T-duality.

\begin{theorem}
Let $(E,H)\to M$ and $(\hat{E},\hat{H})\to M$ be multiple-sphere bundles, let $\Psi$ and $\hat{\Psi}$ be left and right  level-one semi-transgressive sets for $E$ and $\hat{E}$ respectively for which $H$ and $\hat{H}$ are level-one forms. Let $A$ and $\hat{A}$ be the matrices of forms defined by
 \begin{equation}\label{eq:d^2 =0}
 (d\Psi^t~H) = (\Psi^t~0) \,A,\quad \begin{pmatrix}\hat{H} \\d\hat{\Psi}\end{pmatrix} = \hat{A} \,\begin{pmatrix}0\\\hat{\Psi}\end{pmatrix},
 \end{equation}
 where $\Psi^t = (1 ~ \psi_1~\dots~\psi_N)$ and $\hat{\Psi}^t = ( \hat\psi^N~\dots~\hat\psi^1~1)$. If there is an invertible matrix of closed even forms $P$ such that
  \begin{equation}\label{eq:assumption?}
  P = \begin{pmatrix}
  1 & \mathbf{0}^t&0\\
\mathbf{0} & \tilde{P} &\mathbf{0}\\
0& \mathbf{0}^t & 1
  \end{pmatrix}
  \end{equation}
  and
  \[\hat{A} = P^{-1} A P,\]
 then $(E,H)$ and $(\hat{E},\hat{H})$ are a T-dual pair. 
\end{theorem}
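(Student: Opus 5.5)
The plan is to copy the computation \eqref{eq:H-hatH} from the proof of the T-duality theorem for iterated sphere bundles. The triangular shape of $A$ is never essential there; what matters is the two matrix identities \eqref{eq:d^2 =0} and the relation $\hat A = P^{-1}AP$. The candidate T-duality kernel is
\[F := -\,(\Psi^t~0)\, P \begin{pmatrix}0\\ \hat\Psi\end{pmatrix} = -\sum_{i,k} \psi_i\, P_{ik}\, \hat\psi_k ,\]
pulled back to $E\times_M\hat E$ (with $\Psi$ on the left and $\hat\Psi$ on the right, matching the left/right conventions). Since the corner entries of $P$ are $1$ and the off-block entries vanish by \eqref{eq:assumption?}, the terms involving $\psi_0=1$ or $\hat\psi^0=1$ contribute nothing. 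So $F$ is a combination $\sum \psi_i \tilde P_{ik}\hat\psi_k$, which is of transgressive level two, and its pairing $\IP{\Psi}^*\times\IP{\hat\Psi}^*\to\R$ is given by the fiberwise value of $\tilde P$ (up to sign).

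The first step is to rewrite $H-\hat H$ as a bilinear expression, exactly as in the first line of \eqref{eq:H-hatH}:
\[H - \hat H = (\mathbf 0^t~H)\,P\begin{pmatrix}0\\\hat\Psi\end{pmatrix} - (\bar\Psi^t~0)\,P\begin{pmatrix}\hat H\\ \mathbf 0\end{pmatrix}.\]
This uses the block form of $P$ with corner entries $1$ and $\psi_0=\hat\psi^0=1$. Next, substitute $(\mathbf 0^t~H) = (\Psi^t~0)A - (d\Psi^t~0)$ and $\begin{pmatrix}\hat H\\ \mathbf 0\end{pmatrix} = \hat A\begin{pmatrix}0\\\hat\Psi\end{pmatrix} - d\begin{pmatrix}0\\\hat\Psi\end{pmatrix}$, both read off from \eqref{eq:d^2 =0}, keeping track of the sign operator $\bar{\ }$ as in the earlier proof. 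The two terms $(\Psi^t~0)AP(0,\hat\Psi)^t$ and $(\bar\Psi^t~0)P\hat A(0,\hat\Psi)^t$ cancel because $P\hat A = AP$. What is left is $-(d\Psi^t~0)P(0,\hat\Psi)^t + (\bar\Psi^t~0)P\,d(0,\hat\Psi)^t$.

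The new ingredient, compared with the diagonal case, is recognising this remainder as $-d\big((\Psi^t~0)P(0,\hat\Psi)^t\big) = dF$. By the Leibniz rule \eqref{eq:properties},
\[d(\Psi^t P \hat\Psi) = (d\Psi^t)P\hat\Psi - \bar\Psi^t (dP)\hat\Psi + \overline{\Psi^t P}\, d\hat\Psi .\]
The middle term vanishes because $P$ consists of closed forms. Because the entries of $P$ are even, $\overline{\Psi^t P} = \bar\Psi^t P$, so the last term is exactly the one appearing in the remainder. This is precisely where the hypothesis that $P$ is a matrix of closed even forms is needed. Hence $dF = p^*H - \hat p^*\hat H$, which gives the gerbe trivialization.

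The main obstacle I expect is nondegeneracy. I would argue that $P$ is invertible as a matrix of forms, hence so is $\tilde P$. Restricting $F$ to a fiber and evaluating on the dual vectors of the generators $\psi_i$ and $\hat\psi^k$ yields the degree-zero part of $\tilde P$, and a matrix of even forms is invertible iff its degree-zero part is invertible. The delicate points are making sense of ``$F(v,\hat v)$'' when the generators are only cochains rather than forms, and checking that the higher-degree parts of $\tilde P$ do not affect the pairing. I would handle both by working pointwise in the de Rham model and using the normalisation $\pi_*(\psi_N\cdots\psi_1)=1$ to identify $\IP{\Psi}^*$ with the fiber directions dual to the $\psi_i$.
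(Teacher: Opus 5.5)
Your proposal is correct and is essentially the paper's proof: the same kernel (your extra minus sign is the right one for $dF=p^*H-\hat p^*\hat H$, whereas the paper's $F$ satisfies $dF=\hat H-H$), the same substitution of the two matrix identities together with $P\hat A=AP$, and nondegeneracy from invertibility of $\tilde P$. Your Leibniz check and your degree-zero remark only make explicit where closedness, evenness and invertibility of $P$ enter.

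One sign caveat, shared with the paper's own display. Since $\bar{\psi}_0=\bar{1}=-1$, your first line actually equals $H+\hat H$. The two $A$-terms do not cancel but differ by $(2,0,\dots,0)AP(0,\hat\Psi)^t=2\hat H$. Also, the Leibniz rule gives $-\overline{\Psi^tP}\,d\hat\Psi$, not $+$. These slips offset, so $H-\hat H=-d\bigl((\Psi^t~0)P(0,\hat\Psi)^t\bigr)$ does hold, and it is most cleanly checked by expanding that differential directly.
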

\begin{proof}
The key computation of the proof is identical to \eqref{eq:H-hatH}, which we repeat below for completeness:
\begin{align*}
H -\hat{H} & = \begin{pmatrix}\mathbf{0}\\ H\end{pmatrix}^t  P \begin{pmatrix}0 \\ \hat\Psi\end{pmatrix} - \begin{pmatrix}\bar{\Psi} \\ 0\end{pmatrix}^t  P \begin{pmatrix}\hat{H}\\ \mathbf{0}\end{pmatrix} \\
&=  
\left(\begin{pmatrix}\bar\Psi\\ 0\end{pmatrix}^tA - d\begin{pmatrix} \Psi\\ 0\end{pmatrix}^t \right) P \begin{pmatrix}0 \\ \hat\Psi\end{pmatrix} - \begin{pmatrix}\bar\Psi \\ 0\end{pmatrix}^t  P \left(P^{-1}A P\begin{pmatrix}0\\ \hat{\Psi}\end{pmatrix} -d\begin{pmatrix}0 \\ \hat\Psi\end{pmatrix}\right) \\
&= - d\begin{pmatrix} \Psi\\ 0\end{pmatrix}^t  P \begin{pmatrix}0 \\ \hat\Psi\end{pmatrix}  +\begin{pmatrix}\bar\Psi \\ 0\end{pmatrix}^t P d\begin{pmatrix}0 \\ \hat\Psi\end{pmatrix}\\ 
& = - d\left(\begin{pmatrix} \Psi\\ 0\end{pmatrix}^t  P \begin{pmatrix}0 \\ \hat\Psi\end{pmatrix} \right) \\
& = -d\left(\begin{pmatrix} \psi_1 & \cdots & \psi_N\end{pmatrix} \tilde{ P} \begin{pmatrix}\hat\psi^N \\ \vdots \\ \hat\psi^1\end{pmatrix} \right).
\end{align*}
 The first and last equality hold because of the assumption on the specific form of the matrix $P$ and the second equality follows from \eqref{eq:d^2 =0}. Since $\tilde{P}$ is invertible, the form 
 \[F= \begin{pmatrix} \psi_1 & \cdots & \psi_N\end{pmatrix} \tilde{ P} \begin{pmatrix}\hat\psi^N \\ \vdots \\ \hat\psi^1\end{pmatrix} \]
 satisfies the nondegeneracy condition in the definition of T-duality.
%
\end{proof}

While the theorem gives us a way to  identify T-dual pairs beyond iterated sphere bundles, it does not address the  existence question.

\bibliographystyle{hyperamsplain-nodash}
\bibliography{references}

\end{document}